\renewcommand{\epsilon}{\varepsilon}
\newcommand{\newsection}[1]
{\subsection{#1}\setcounter{theorem}{0} \setcounter{equation}{0}
\par\noindent}
\newtheorem{theorem}{Theorem}
\newtheorem{lemma}[theorem]{Lemma}
\newtheorem{corr}[theorem]{Corollary}
\newtheorem{proposition}[theorem]{Proposition}
\newtheorem{deff}[theorem]{Definition}
\newcommand{\bth}{\begin{theorem}}
\newcommand{\ble}{\begin{lemma}}
\newcommand{\bcor}{\begin{corr}}
\newcommand{\bdeff}{\begin{deff}}
\newcommand{\bprop}{\begin{proposition}}
\newcommand{\ele}{\end{lemma}}
\newcommand{\ecor}{\end{corr}}
\newcommand{\edeff}{\end{deff}}
\newcommand{\eprop}{\end{proposition}}
\newcommand{\Rn}{{\mathbb R}^n}
\newcommand{\la}{\lambda}
\newcommand{\e}{\varepsilon}
\renewcommand{\Pi}{\varPi}
\renewcommand{\epsilon}{\varepsilon}
\newcommand{\R}{{\mathbb R}}
\newcommand{\dx}{d_X}
\newcommand{\dy}{d_Y}
\newcommand{\1}{\mathbb{1}}
\newcommand{\laeps}{\mathbb{1}_{[\la-\e(\la), \la+\e(\la)]}}
\newcommand{\el}{\e(\la)}
\newcommand{\exmu}{e^X_{\mu_i}}
\newcommand{\eynu}{e^Y_{\nu_j}}
\newcommand{\munu}{(\mu_i, \nu_j)}
\newcommand{\Ale}{A_{\la,\e}}
\newcommand{\eprod}{e^X_{\mu_i}e^Y_{\nu_j}}
\newcommand{\Ohigh}{\Omega_{high}}
\newcommand{\Olow}{\Omega_{low}}
\newcommand{\Ol}{\Omega_{\ell}}
\newcommand{\sprod}{S^{d_1}\times S^{d_2}}
\newcommand{\ola}{{\mathbb{1}}_{[\la-\la^{-1}, \la+\la^{-1}]}}
\newcommand{\lala}{\mathbb{1}_{[\la-\la^{-1}, \la+\la^{-1}]}}
\newcommand{\mui}{\mu_i}
\begin{document}

\title[Improved Estimates on Product Manifolds]
{Product Manifolds with Improved Spectral Cluster and Weyl Remainder Estimates}
%
%
%
%

\keywords{Eigenfunctions, Weyl formula, spectrum}
\subjclass[2010]{58J50, 35P15}

\thanks{The second author was supported in part by the NSF (DMS-1665373). }

\author[]{Xiaoqi Huang}
\address[X.H.]{Department of Mathematics, University of Maryland, College Park. MD 20742}
\email{xhuang49@umd.edu}

\author[]{Christopher D. Sogge}
\address[C.D.S.]{Department of Mathematics,  Johns Hopkins University,
Baltimore, MD 21218}
\email{sogge@jhu.edu}

\author[]{Michael E. Taylor}
\address[M.E.T.]{Department of Mathematics, University of North Carolina, Chapel Hill, NC 27599}
\email{met@math.unc.edu}

\begin{abstract}
We show that if $Y$ is a compact Riemannian manifold with improved $L^q$ eigenfunction estimates
then, at least for large enough exponents, one always obtains improved $L^q$ bounds on the product
manifold $X\times Y$ if $X$ is another compact manifold.
Similarly, improved Weyl remainder term bounds on the spectral counting function of $Y$
lead to corresponding improvements on $X\times Y$.
The latter results partly generalize  recent ones of Iosevich and Wyman~\cite{IoWy} involving products of spheres.
Also, if $Y$ is a product of five or more spheres, we are able to obtain optimal $L^q(Y)$ and  $L^q(X\times Y)$ eigenfunction
and spectral cluster
estimates for large $q$, which partly addresses a conjecture from \cite{IoWy}
and is related to (and is partly based on) classical bounds  for the number of integer lattice
point on $\la \cdot S^{n-1}$ for $n\ge5$.
\end{abstract}

\maketitle

\centerline{ \bf In memoriam: {\em Robert Strichartz (1943-2021)}}

\newsection{Introduction}


Spectral cluster estimates are operator norm estimates from $L^2$ to $L^q$ of spectral projectors
for the Laplace operator on a compact Riemannian manifold.  In more detail, if $X$ is a compact
Riemannian manifold of dimension $d_X$, and Laplace-Beltrami operator $\Delta_X$, the
universal estimate of \cite{sogge88} has the form
\begin{equation}\label{3}
\bigl\|\, \1_{[\la-1,\la+1]}(P_X)\, \bigr\|_{L^2(X)\to L^q(X)} =O(\la^{\alpha(q)}), \quad P_X =\sqrt{-\Delta_X},
\end{equation}
where, with $n=d_X$,
\begin{equation} \label{4}
\aligned
\alpha(q)=\alpha(q, n) &=\max \bigl(n(\tfrac12 - \tfrac1q)-\tfrac12, \, \tfrac{n -1}2(\tfrac12-\tfrac1q)\bigr)
\\
&=
\begin{cases} n(\tfrac12 - \tfrac1q)-\tfrac12, \quad \text{if } \, \, q\ge q_c(n)=\tfrac{2(n+1)}{n-1},
\\ \\
 \tfrac{n -1}2(\tfrac12-\tfrac1q), \quad \text{if } \, \, 2\le q\le q_c(n).
 \end{cases}
\endaligned
\end{equation}

Much work has been done on the study of special classes of compact Riemannian manifolds for
which stronger estimates hold.  One of our goals here is to show that if $Y$ is a compact
Riemannian manifold, of dimension $d_Y$, with Laplace-Beltrami operator $\Delta_Y$, for which
such stronger results (cf.~\eqref{1}) hold, and if $X$ is an arbitrary compact Riemannian
manifold, as described above,
then, under broad circumstances, the product
manifold $X\times Y$, of dimension $d=d_X+d_Y$,
with product metric tensor and Laplace operator $\Delta=\Delta_X+\Delta_Y$,
also has improved spectral cluster estimates.


We formulate the improved spectral cluster estimates on $Y$ as follows:
\begin{equation}\label{1}
\bigl\| \, \laeps(P_Y) \, \bigr\|_{L^2(Y)\to L^q(Y)} \lesssim \sqrt{\el} \,  B(\la),
\end{equation}
where $P_Y=\sqrt{-\Delta_Y}$, $B(\la)=B(\la,q,Y)$, and $2<q\le \infty$.  Typically $B(\la)$ is $\la$  raised to a power
(see e.g.  \cite{Zygmundlast}, \cite{BourgainDemeterDecouple}, \cite{Hickman}, and \cite{IoWy})
and possibly also involving $\log\la$-powers (see e.g. \cite{Berard}, \cite{SBLog}, \cite{CGBeams}, \cite{HassellTacy}, \cite{SoggeHangzhou}).  So it is natural to assume that
\begin{equation}\label{a}
B(\theta \la)\le C_0 \, B(\la) \quad \text{if } \, \, \la^{-1}\le  \theta \le 2.
\end{equation}
As we shall see later in \eqref{bla}, the improved bound in \eqref{1} requires
\begin{equation}\label{aa}\tag{1.4$'$}
B(\la) \gtrsim \la^{\alpha(q)},\,\,\, \forall \, \, \la \ge 1,
\end{equation}
with $\alpha(q)=\alpha(q,d_Y)$ being the exponent in \eqref{4} depending on the dimension $d_Y$ and $q$.

If $B(\la)=\la^{\alpha(q)}$, with $\alpha(q)$ being the exponent in the universal bounds that are due to the second author \cite{sogge86}, \cite{sogge88} (see \eqref{4}), then
\eqref{1} would represent a $\sqrt{\e(\la)}$ improvement over these bounds by projecting onto bands of size $\approx \e(\la)$
(as opposed to unit sized bands).  We shall always assume that $\e(\la)$ decreases to $0$ as $\la \to \infty$.

We prefer to express improvements including the factor $\sqrt{\e(\la)}$ since this would
match with the bounds for the Stein-Tomas extension operators for $q\ge q_c$ (see \S 4).  If $B(\la)=\la^{\alpha(q)}$ as in \eqref{4} and if
$\e(\la)$ is very small, then they are only possible for certain $q$ larger than the critical
exponent $q_c=\tfrac{2(\dy+1)}{\dy-1}$.
For instance, if $\e(\la)=\la^{-1}$ then one must have
$q\ge \tfrac{2\dy}{\dy-2}$, since otherwise $\sqrt{\e(\la)}\,
\la^{\alpha(q)}\to 0$ as $\la\to \infty$.

We shall also assume that $\e(\la)$ does not go to zero faster than the wavelength of eigenfunctions of frequency $\la$.
More precisely, we shall assume that
\begin{equation}\label{2}
t\to t\e(t)\quad \text{is non-decreasing for} \, \, t\ge 1.
\end{equation}
We note that this is equivalent to the condition that
\begin{equation}\label{2'}\tag{1.5$'$}
\theta \, \e(\theta \la)\le \e(\la) \quad \text{if } \, \, \la \ge 1 \, \, \, \text{and } \, \,
\la^{-1}\lesssim \theta \le 1.
\end{equation}
Typically $\e(\la)=\la^{-\delta}$ for some $\delta\in (0,1]$ or $\e(\la)=(\log\la)^{-\delta}$ for some $\delta>0$.


Here is our first main result.

\begin{theorem}\label{mainthm}  Assume that \eqref{1} is valid with $B(\la)$ and $\e(\la)$ satisfying \eqref{a}--\eqref{aa} and \eqref{2},
respectively.  Then if $\Delta = \Delta_X+\Delta_Y$ is the Laplace-Beltrami operator on the product manifold
and $P=\sqrt{-\Delta}$ we have for all $q>2$
\begin{equation}\label{m.1}
\bigl\| \, \laeps(P)\, \bigr\|_{L^2(X\times Y)\to L^q(X\times Y)}
\lesssim \sqrt{\e(\lambda)} B(\la) \la^{\alpha(q,d_X)+1/2},\ \
\forall \, \la \ge1.
\end{equation}
\end{theorem}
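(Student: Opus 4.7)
The plan is to expand $f$ in the joint orthonormal eigenbasis of $P_X, P_Y$ and reduce the product estimate to separate estimates on $X$ (universal bound \eqref{3}) and $Y$ (improved bound \eqref{1}). Writing $f = \sum_{i,j} c_{ij}\, \eprod$, one has
$$F := \laeps(P) f = \sum_i \exmu(x)\, g_i(y), \qquad g_i(y) := \sum_j c_{ij}\, \1_{I_i}(\nu_j)\, \eynu(y),$$
where $I_i = \{\nu \geq 0 : \sqrt{\mu_i^2 + \nu^2} \in [\la - \el, \la + \el]\}$ is an interval of width $W_i \sim 2\la\el/\sqrt{\la^2 - \mu_i^2}$ centered near $\bar\nu_i := \sqrt{\la^2 - \mu_i^2}$; in particular $g_i = \1_{I_i}(P_Y)\, h_i$ for $h_i := \sum_j c_{ij}\, \eynu$.

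The first step is the $x$-integration. For each fixed $y$, $F(\cdot, y)$ is a $P_X$-spectral cluster supported on frequencies $\mu_i \le \la + \el$. Decomposing dyadically into unit slabs $\mu_i \in [k, k+1)$ with $F_k(\cdot, y) := \sum_{\mu_i \in [k,k+1)} \exmu\, g_i(y)$, the universal bound \eqref{3} gives $\|F_k(\cdot, y)\|_{L^q_x} \le C(k+1)^{\alpha(q, \dx)}\, \|F_k(\cdot, y)\|_{L^2_x}$. Summing via the triangle inequality and then Cauchy--Schwarz over the $O(\la)$ relevant values of $k$, combined with the $L^2$-Plancherel identity $\sum_k \|F_k(\cdot, y)\|_{L^2_x}^2 = \|F(\cdot, y)\|_{L^2_x}^2$, yields
$$\|F(\cdot, y)\|_{L^q_x} \le C\la^{\alpha(q, \dx) + 1/2}\, \|F(\cdot, y)\|_{L^2_x}.$$
Taking $L^q_y$ of both sides and applying Minkowski's inequality (valid since $q \ge 2$) to $\|F(\cdot, y)\|_{L^2_x}^2 = \sum_i |g_i(y)|^2$ reduces \eqref{m.1} to the pointwise estimate
$$\|g_i\|_{L^q(Y)} \le C\sqrt{\el}\, B(\la)\, \|h_i\|_{L^2(Y)} \qquad (\ast)$$
uniformly in $i$, since then $\sum_i \|g_i\|_{L^q_y}^2 \le C^2\, \el\, B(\la)^2\, \|f\|_{L^2}^2$ by Plancherel on $X \times Y$.

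The principal obstacle is establishing $(\ast)$, because $I_i$ is not of the canonical form $[\la - \el, \la + \el]$ appearing in \eqref{1}. The plan is to split into two regimes. In the regular regime $\mu_i \le \la/2$, $I_i$ has width $W_i \lesssim \el$ and is centered at $\bar\nu_i \in [\sqrt{3}\la/2, \la]$; hence $I_i$ embeds in some interval $[c - K\e(c), c + K\e(c)]$ with $c \sim \bar\nu_i \sim \la$ and an absolute constant $K$, and \eqref{1} applied at scale $c$ together with the regularity hypotheses \eqref{a} and \eqref{2'} compares $\sqrt{\e(c)}\, B(c)$ to $\sqrt{\el}\, B(\la)$, yielding $(\ast)$. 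In the degenerate regime $\mu_i > \la/2$, $I_i$ may have width up to $\sqrt{\la\el}$ and may extend down to $\nu = 0$; here one partitions $I_i$ into sub-intervals of local width $\sim \e(c)$, applies \eqref{1} on each, and sums via $L^2$-orthogonality of the sub-projections together with Cauchy--Schwarz over the partition. The lower bound \eqref{aa} on $B$ and the regularity \eqref{a} are then used to absorb the aggregate bound into $C\sqrt{\el}\, B(\la)$. Combining with the first step yields \eqref{m.1}.
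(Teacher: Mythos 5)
Your first step, fixing $y$ and applying the universal $X$-bound uniformly with a window of width $\rho=\la$, is a correct reduction \emph{provided} the ensuing claim $(\ast)$ holds uniformly in $i$.  That is the crux, and $(\ast)$ is in fact false for general $q>2$, which is where the proposal breaks down.

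Concretely, take $\mu_i$ close to $\la$, so that $\bar\nu_i:=\sqrt{\la^2-\mu_i^2}\sim \la 2^{-\ell}$ with $2^{-\ell}$ small.  Then $I_i$ has width $\sim 2^\ell\,\e(\la)$ and sits near frequency $\la 2^{-\ell}$.  Your own sub-partition argument (or the paper's \eqref{2.22}) gives at best
\[
\bigl\|\,\1_{I_i}(P_Y)\,\bigr\|_{L^2\to L^q}\;\lesssim\;\sqrt{2^\ell\,\e(\la)}\;B(\la 2^{-\ell}),
\]
and the only control the hypotheses give on $B$ is the one-sided inequality $B(\la 2^{-\ell})\lesssim B(\la)$ from \eqref{a}, plus the lower bound \eqref{aa}.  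Thus the right side is bounded by $2^{\ell/2}\sqrt{\e(\la)}\,B(\la)$, which is $2^{\ell/2}$ \emph{larger} than the target in $(\ast)$.  To absorb this you would need the quantitative growth $B(\la)\gtrsim 2^{\ell/2}B(\la 2^{-\ell})$, and neither \eqref{a} nor \eqref{aa} supplies it: in the extremal case $B(\la)=\la^{\alpha(q,d_Y)}$ this asks for $\alpha(q,d_Y)\ge\tfrac12$, which fails whenever $q$ is close to $2$.  So $(\ast)$ cannot hold uniformly in $i$, and the proposal does not prove the theorem for all $q>2$.

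The missing idea, and the heart of the paper's argument, is that the loss of $2^{\ell/2}$ on the $Y$-side must be paid for by a \emph{gain on the $X$-side}, not absorbed by $B$.  If one decomposes the annulus $\Ale$ dyadically according to $\bar\nu_i\sim \la 2^{-\ell}$ (the regions $\Ol$), then the relevant $\mu_i$ do not range over an interval of length $\la$: by the geometric Lemma~\ref{lemma1}, inequality \eqref{2.19}, they are confined to an interval $I_\ell$ of length $\lesssim \la\,2^{-2\ell}$.  The $X$-bound on that slab is therefore $\la^{\alpha(q,d_X)}\sqrt{\la\,2^{-2\ell}} = \la^{\alpha(q,d_X)+1/2}\,2^{-\ell}$, a gain of $2^{-\ell}$ over your crude $\rho=\la$ estimate, and this beats the $2^{\ell/2}$ loss with $2^{-\ell/2}$ to spare — which is exactly the geometric decay that makes the sum over $\ell$ converge (see \eqref{2.28}).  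By taking $\rho=\la$ uniformly across all $\mu_i$ you forfeit this gain at the outset, and nothing downstream can recover it.  The regular regime $\mu_i\le\la/2$ in your proposal is handled correctly and corresponds to $\Ohigh$; the degenerate regime needs the dyadic decomposition and the coupling between $\ell$ and $|I_\ell|$, not a uniform bound.
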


As we shall see at the end of the next section, if  \eqref{1} is an improvement on $Y$ over the universal bounds in \cite{sogge88}, then, at
least for sufficiently large exponents, \eqref{m.1} says that there are improved $L^q$ spectral projection estimates on $X\times Y$.


Our second main result deals with the Weyl law, for the specral counting function of the Laplace operator.
Recall that the universal Weyl formula of Avakumovic~\cite{Avakumovic},
Levitan~\cite{Levitan} and H\"ormander~\cite{HSpec} states that if $N(X,\lambda)$ denotes the number of
eigenvalues, counted with multiplicity, of $P_X$ which are $\le \lambda$ then
\begin{equation}\label{3.1}
N(X,\lambda)=(2\pi)^{-d_X} \omega_{d_X}  (\text{Vol}\,X) \, \lambda^{d_X} +O(\lambda^{d_X-1}),
\end{equation}
with $\omega_{n}$ denoting the volume of the unit ball in $\Rn$.

This result cannot be improved if $X$ is the
round sphere of dimension $d_X$.  On the other hand, there are a number of results that do yield
improved Weyl remainder estimates.  In \cite{DuistermaatGuillemin} it is shown that one can improve
$O(\lambda^{d_X-1})$ to $o(\lambda^{d_X-1})$ in case the set of periodic geodesics has measure zero.
The paper \cite{Berard} shows that under certain geometrical conditions, such as nonpositive curvature,
one can improve the remainder estimate to $O(\lambda^{d_X-1}/\log \lambda)$.  Recently, Canzani and Galkowski
\cite{CZWeyl} obtained such an improved remainder estimate for a much broader class of Riemannian manifolds.
Among the results obtained there is that one gets this $(\log \lambda)^{-1}$ improvement on each Cartesian
product manifold, with the product metric.  Iosevich and Wyman \cite{IoWy} showed there are power improvements
for products of spheres.

As with the $L^q$ improvements in Theorem~\ref{mainthm}, we shall assume that there are $\e(\la)$ improvements, with $\e(\la)$ as in
\eqref{2}--\eqref{2'}, for $Y$,  and show that these carry over for $X\times Y$.  So, we shall assume that
\begin{equation}\label{3.2}
N(Y,\lambda)=(2\pi)^{-d_Y} \omega_{d_Y}  (\text{Vol}\,Y) \,  \lambda^{d_Y} +R_Y(\la),  \quad \text{with } \, \, \,  R_Y(\la)=O(\e(\la) \, \lambda^{d_Y-1}) .
\end{equation}

Here is our second main result.

\begin{theorem}\label{weylthm}
Assume that \eqref{3.2} is valid.  Then, for each compact Riemannian manifold, of dimension $d_X$,
\begin{equation}\label{3.3}
N(X\times Y,\lambda)=(2\pi)^{-d} \omega_{d}\,  \text{\rm Vol}(X\times Y)\,  \lambda^{d} +O(\e(\la) \, \lambda^{d-1}), \quad d=d_X+d_Y.
\end{equation}
\end{theorem}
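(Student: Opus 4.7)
The plan is to enumerate the spectrum of the product via the $P_X$-eigenvalues, writing
\[
N(X\times Y,\la)=\sum_{\mu_j\le\la}N(Y,\sqrt{\la^2-\mu_j^2}),
\]
and substituting the improved Weyl formula \eqref{3.2} on $Y$ to split $N(X\times Y,\la)=A(\la)+B(\la)$ with $A(\la)=c_Y\sum_{\mu_j\le\la}(\la^2-\mu_j^2)^{d_Y/2}$ and $B(\la)=\sum_{\mu_j\le\la}R_Y(\sqrt{\la^2-\mu_j^2})$, where $c_Y=(2\pi)^{-d_Y}\omega_{d_Y}\,\text{Vol}(Y)$.

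The remainder sum $B(\la)$ is controlled using the monotonicity \eqref{2'}: since $\sqrt{\la^2-\mu_j^2}\,\e(\sqrt{\la^2-\mu_j^2})\le\la\e(\la)$, we get $|R_Y(\sqrt{\la^2-\mu_j^2})|\lesssim\la\e(\la)(\la^2-\mu_j^2)^{(d_Y-2)/2}$. A summation by parts against $dN(X,\mu)$, controlled by the universal Weyl law \eqref{3.1} on $X$, yields $\sum_{\mu_j\le\la}(\la^2-\mu_j^2)^{(d_Y-2)/2}\lesssim\la^{d-2}$, and hence $|B(\la)|\lesssim\e(\la)\la^{d-1}$.

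For the main sum $A(\la)$, I write it as a Stieltjes integral and integrate by parts to get
\[
A(\la)=c_Y d_Y\int_0^\la \mu(\la^2-\mu^2)^{d_Y/2-1}N(X,\mu)\,d\mu.
\]
Inserting $N(X,\mu)=c_X\mu^{d_X}+E_X(\mu)$ and computing the main integral via Fubini isolates the leading term $(2\pi)^{-d}\omega_d\,\text{Vol}(X\times Y)\la^d$, leaving the error
\[
\mathcal{E}(\la):=c_Y d_Y\int_0^\la \mu(\la^2-\mu^2)^{d_Y/2-1}E_X(\mu)\,d\mu.
\]
The crude bound $|E_X(\mu)|\lesssim\mu^{d_X-1}$ only yields $|\mathcal{E}(\la)|\lesssim\la^{d-1}$; producing the extra factor $\e(\la)$ is the main obstacle.

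To gain this factor I integrate by parts once more, setting $F(t):=\int_0^t E_X(s)\,ds$: since $X$ is a compact manifold without boundary, the classical H\"ormander-type estimate for the Riesz mean of order one gives $|F(t)|\lesssim t^{d_X-1}$. Combined with $\bigl\|\partial_\mu[\mu(\la^2-\mu^2)^{d_Y/2-1}]\bigr\|_{L^1([0,\la])}\lesssim \la^{d_Y-1}$ and the vanishing of the boundary terms (since the weight vanishes at $0$ and $\la$ and $F(0)=0$), this produces $|\mathcal{E}(\la)|\lesssim \la^{d_X-1}\cdot\la^{d_Y-1}=\la^{d-2}$. Since \eqref{2'} forces $\e(\la)\gtrsim\la^{-1}$, we have $\la^{d-2}\lesssim\e(\la)\la^{d-1}$, completing the proof of \eqref{3.3}.
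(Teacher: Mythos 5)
Your decomposition of $N(X\times Y,\la)$ and your treatment of the remainder sum $B(\la)$ match the paper's; the interesting difference is in how you isolate the main term. The paper recognizes $A(\la)$ as $c_Y\la^{d_Y}$ times the trace of the Bochner--Riesz kernel $(1-P_X^2/\la^2)_+^{d_Y/2}$ and proves a self-contained Proposition (its \eqref{3.8}) giving that trace up to an $O(\la^{d_X-2})$ error via the Hadamard parametrix. You instead integrate by parts twice against $dN(X,\mu)$, push the error onto $F(t)=\int_0^t E_X(s)\,ds$, and cite the classical estimate that the first-order Riesz mean on a closed manifold has no $t^{d_X-1}$ correction, so $F(t)=O(t^{d_X-1})$. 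These are two ways of packaging the same underlying fact --- Riesz means of order $\ge1$ on a boundaryless manifold have error $O(\la^{d_X-2})$ --- so the key ingredient is essentially identical, but your route is shorter if one is content to cite the Riesz-mean bound rather than rederive it from the wave parametrix, as the paper does in order to be self-contained.

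Two small repairs are needed. First, the inequality $\nu\,\e(\nu)\le\la\,\e(\la)$ only follows from \eqref{2} for $\nu\ge1$; for the indices with $\sqrt{\la^2-\mu_j^2}<1$ you should, as the paper does, use $R_Y(\theta)=O(1)$ for $\theta\le1$ together with the count $O(\la^{d_X})$ of such terms and $\la^{d_X}\cdot 1 = \la^{d-1}\la^{1-d_Y}\lesssim\e(\la)\la^{d-1}$ since $d_Y\ge2$ and $\e(\la)\gtrsim\la^{-1}$. Second, your statement that the weight $g(\mu)=\mu(\la^2-\mu^2)^{d_Y/2-1}$ vanishes at $\mu=\la$ is false when $d_Y=2$ (there $g(\la)=\la$); however, the surviving boundary term $g(\la)F(\la)=O(\la\cdot\la^{d_X-1})=O(\la^{d-2})$ is still within the allowed error, so the conclusion is unaffected. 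With these minor patches, your argument is correct.
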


The structure of the rest of this paper is the following.  In \S{2} we prove Theorem \ref{mainthm}
and give applications.  In \S{3} we prove Theorem \ref{weylthm}.  Section 4 presents some further results,
including a study of multiple products of spheres.

\newsection{Proof of $L^q$--improvements and some applications}

Let us now turn to the proof of Theorem~\ref{mainthm}.  We first choose an orthonormal basis, $\{\exmu\}$,  of
eigenfunctions of $P_X$ with eigenvalues $\mu_i$ and $\{\eynu\}$ of $P_Y$ with eigenvalues $\nu_j$.  Thus,
\begin{equation}\label{2.9}
-\Delta_X \exmu = \mu^2_i \exmu \, \, \, \text{and } \, \, \,
-\Delta_Y \eynu =\nu^2_j \eynu.
\end{equation}
Then $\{ \exmu(x)\cdot \eynu(y)\}_{i,j}$ is an orthonormal basis of eigenfunctions of $P=\sqrt{-\Delta}$, where $\Delta=\Delta_X+\Delta_Y$,
with eigenvalues $\sqrt{\mu_i^2+\nu_j^2}$, i.e.,
\begin{equation}\label{2.10}
-\Delta (\exmu \eynu)=(\mu_i^2+\nu_j^2) \exmu\eynu.
\end{equation}

Consequently, the first inequality, \eqref{m.1}, is equivalent to the following
\begin{equation}\label{2.11}
\Bigl\| \, \sum_{\munu \in \Ale} a_{ij} \, \eprod \, \Bigr\|_{L^q(X\times Y)}\lesssim B(\la)
\la^{\alpha(q)}\, \sqrt{\la \e(\la)} \, \|a\|_{\ell^2}
\end{equation}
with $\|a\|_{\ell^2} =(\sum_{i,j}|a_{ij}|^2)^{1/2}$ and $\Ale$ denoting the $\el$-annulus about $\la\cdot S^2$, i.e.,
\begin{equation}\label{2.12}
\Ale =\{ \, (\mu,\nu): \, |\la-\sqrt{\mu^2+\nu^2}|\le \e(\la)\}.
\end{equation}

To be able to use our assumptions \eqref{1} and \eqref{2'} and the universal bounds \eqref{3} it is natural
to break up the annulus into several pieces.  Specifically,
let
\begin{equation}\label{2.13}
\Ohigh = \{ (\mu,\nu)\in \Ale: \, |\nu|\ge \la/2\},
\end{equation}
denote the portion of $\Ale$ where $|\nu|$ is relatively large and
\begin{equation}\label{2.14}
\Olow = \{ (\mu,\nu)\in \Ale: \, |\nu|\le 1\},
\end{equation}
be the portion where it is relatively small. We shall also
break up the remaining region
into the following disjoint dyadic pieces
\begin{equation}\label{2.15}
\Ol = \{(\mu,\nu) \in \Ale \backslash \Ohigh : \, \, |\nu|\in (\la 2^{-\ell}, \, \la 2^{-\ell +1}]\}.
\end{equation}
Thus,
\begin{equation}\label{2.16}
\Ale = \Ohigh \cup   \Olow \cup\,  \Bigl( \bigcup_{2^{-\ell} \in [\la^{-1}, \frac14]}
\Ol \, \Bigr).
\end{equation}

We shall use the following simple lemma describing the geometry of each of these pieces.

\begin{lemma}\label{lemma1}  There is a uniform constant $C_0$ so that
\begin{equation}\label{2.17}
|\sqrt{\la^2-\mu^2}-\nu|\le C_0 \e(\la) \quad
\text{if } \, \, (\mu,\nu)\in \Ohigh \, \, \text{and } \, \, \mu,\nu\ge0.
\end{equation}
Also if $\Ol\ne \emptyset$ then
\begin{equation}\label{2.18}
|\sqrt{\la^2-\mu^2}-\nu|\le C_0 2^\ell \e(\la) \quad
\text{if } \, \, (\mu,\nu)\in \Ol \, \, \text{and } \, \, \mu,\nu\ge0,
\end{equation}
and if
\begin{equation}\label{il}
I_\ell =\{\mu: \, (\mu,\nu)\in \Ol, \, \mu,\nu\ge 0\},
\end{equation}
then for fixed $\ell$ with $2^{-\ell}\ge \la^{-\frac12}$,
\begin{equation}\label{2.19}
I_\ell \quad \text{is an interval in } \, \, [0,\la] \quad \text{of length } \, \,
|I_\ell|\le C_0\la 2^{-2\ell},
\end{equation}
and also
\begin{multline}\label{2.20}
|\sqrt{\la^2-\nu^2}-\mu|\le C_0 \quad \text{if } \, \, \,  \mu,\nu\ge 0, \\
\text{and } \, \, (\mu,\nu)\in \Ol \,\,\,\text{with}\,\,2^{-\ell}\le \la^{-\frac12}, \,\,\,\text{or}\,\,\, (\mu,\nu)\in \Olow .
\end{multline}
\end{lemma}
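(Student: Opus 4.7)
The plan is to derive all four estimates from the single algebraic identity
\[
\bigl|\sqrt{\la^2-a^2}-b\bigr| = \frac{|\la^2-a^2-b^2|}{\sqrt{\la^2-a^2}+b},
\]
together with the annulus bound
\[
|\la^2-\mu^2-\nu^2| = \bigl|\la-\sqrt{\mu^2+\nu^2}\bigr|\,\bigl(\la+\sqrt{\mu^2+\nu^2}\bigr) \lesssim \la\,\el,
\]
valid for every $\munu \in \Ale$. Each claim then reduces to bounding the denominator from below on the appropriate region, a matter of elementary estimates using the $\nu$-range defining that region.

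For \eqref{2.17} and \eqref{2.18} I would take $a=\mu$ and $b=\nu$. In $\Ohigh$ the hypothesis $\nu\ge \la/2$ forces $\mu^2 \le (\la+\el)^2-\la^2/4 < \la^2$ for $\la$ large, so the square root is real and $\sqrt{\la^2-\mu^2}+\nu \ge \la/2$; dividing yields $O(\el)$. The analogue for $\Ol$ runs with $\nu > \la 2^{-\ell}$ in place of $\la/2$, at the cost of a factor $2^{\ell}$.

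For \eqref{2.19} I would observe that for each admissible $\mu\ge 0$ the constraints
\[
(\la-\el)^2 \le \mu^2+\nu^2 \le (\la+\el)^2,\qquad \la 2^{-\ell} < \nu \le \la 2^{-\ell+1},
\]
cut out an interval of $\nu$'s, and the projection of the intersection of a curved annulus with a horizontal strip in the first quadrant onto the $\mu$-axis is connected. Hence $I_\ell$ is an interval whose endpoints are
\[
\mu_{+} = \sqrt{(\la+\el)^2-\la^2 4^{-\ell}},\qquad \mu_{-} = \max\bigl\{0,\, \sqrt{(\la-\el)^2-4\la^2 4^{-\ell}}\bigr\}.
\]
Applying the same identity gives
\[
\mu_{+} - \mu_{-} \le \frac{\mu_+^2 - \mu_-^2}{\mu_+ + \mu_-} \le \frac{3\la^2 4^{-\ell} + 4\la\el}{\la\sqrt{1-4\cdot 4^{-\ell}}} \lesssim \la 4^{-\ell} + \el,
\]
where $2^{-\ell}\le 1/4$ is used so that the denominator is comparable to $\la$. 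Under the hypothesis $2^{-\ell}\ge \la^{-1/2}$ one has $\la 4^{-\ell}\ge 1$, and since $\e$ is bounded, $\el \lesssim 1 \lesssim \la 4^{-\ell}$; so $|I_\ell| \lesssim \la 2^{-2\ell}$ as claimed.

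For \eqref{2.20} I would interchange the roles of $\mu$ and $\nu$ in the identity. In $\Olow$ one has $\nu\le 1$, while in $\Ol$ with $2^{-\ell}\le \la^{-1/2}$ one has $\nu^2 \le 4\la$; in either case $\nu^2 = o(\la^2)$, so $\sqrt{\la^2-\nu^2}\gtrsim \la$ and the denominator $\sqrt{\la^2-\nu^2}+\mu$ is $\gtrsim \la$. The quotient is then $\lesssim \el$, which is $O(1)$, yielding \eqref{2.20}. The main obstacle is really only bookkeeping, namely verifying in each region that the square roots are real and that the correct power of $2^{\ell}$ appears; the lemma is essentially a direct consequence of the geometric definitions of $\Ohigh$, $\Ol$, and $\Olow$.
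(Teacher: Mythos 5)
Your proof is correct, and it is essentially the same elementary argument as the paper's (which is stated only as a three-sentence sketch): the paper bounds $|\sqrt{\la^2-\mu^2}-\nu|$ by using that $\nu$ is bounded below on $\Ohigh$ and on each $\Ol$, and it describes $\Ol$ via polar coordinates $(\mu,\nu)=r(\cos\theta,\sin\theta)$ with $r\in[\la-\el,\la+\el]$, $\theta\approx 2^{-\ell}$; you carry out the equivalent algebra explicitly through the quotient identity and the endpoint formulas for $I_\ell$. One small cosmetic point worth tightening: for the denominator in your \eqref{2.19} estimate it is cleanest to bound $\mu_++\mu_-\ge\mu_+\ge\la\sqrt{1-4^{-\ell}}\gtrsim\la$ directly from the $\mu_+$ formula (using $2^{-\ell}\le 1/4$), rather than appealing to $\mu_-$, whose formula carries the $-2\la\el$ term and the $\max\{0,\cdot\}$ clamp; your bound still holds as written since $\sqrt{1-4\cdot4^{-\ell}}\le\sqrt{1-4^{-\ell}}$, but the cleaner route avoids checking the clamp.
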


The bound in \eqref{2.17} is straightforward since $\nu\ge \la/2$ for the $(\mu,\nu)$ there.
One obtains \eqref{2.20} similarly and indeed can replace $C_0$ there by $C_0 \e(\la)$,
although this will not be needed.  One obtains \eqref{2.18} and \eqref{2.19} by noting that
the $(\mu,\nu)$ there must be of the form
$$(\mu,\nu)=r(\cos\theta,\sin\theta) \quad
\text{with } \, \, r\in [\la-\e(\la),\la+\e(\la)] \quad \text{and } \, \theta\approx 2^{-\ell}.$$

We also require the following estimates which are a simple consequence of our main assumption \eqref{1} and
the universal bounds \eqref{3}.

\begin{lemma}\label{lemma2}
There is a universal constant $C_0$ so that
\begin{equation}\label{2.21}
\bigl\| \, \1_{[\la-\rho,\la+\rho]}(P_X)\, \bigr\|_{L^2(X)\to L^q(X)} \le C_0
\rho^{1/2} \, \la^{\alpha(q)}, \, \, \, \text{if } \, \, \rho\in [1,\la],
\end{equation}
and
\begin{equation}\label{2.22}
\bigl\| \, \1_{[\la-\rho,\la+\rho]}(P_Y)\, \bigr\|_{L^2(Y)\to L^q(Y)} \le C_0
\rho^{1/2} \, B(\la), \, \, \, \text{if } \, \, \rho\in [\e(\la),\la],
\end{equation}
and also
\begin{equation}\label{2.23}
\big\| \, \1_{[2^k, 2^{k+1}]}(P_Y)\, \bigl\|_{L^2(Y)\to L^q(Y)}
\le C_0 2^{\dy(\frac12-\frac1q)k}.
\end{equation}
\end{lemma}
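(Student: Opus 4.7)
The three bounds share a common strategy. For \eqref{2.21} and \eqref{2.22} the plan is to cover the wide spectral band $[\la-\rho,\la+\rho]$ by a disjoint union of narrower sub-bands on which an improved $L^2\to L^q$ bound is already available---unit sub-bands coming from the universal estimate \eqref{3} for $X$, and sub-bands of width $\sim\el$ coming from the hypothesis \eqref{1} for $Y$---and then to recombine the sub-band projectors via Minkowski, Cauchy--Schwarz, and the $L^2$-orthogonality of disjoint spectral bands. The third bound \eqref{2.23} I would derive separately from Sobolev embedding.

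For \eqref{2.21} I would partition $[\la-\rho,\la+\rho]$ into $\lesssim\rho$ unit intervals $I_k=[\la+k,\la+k+1]$ with $|k|\le\rho$. Since $|k|\le\rho\le\la$, each $I_k$ sits at frequency $\asymp\la$, and \eqref{3} gives $\|\1_{I_k}(P_X)\|_{L^2\to L^q}\lesssim\la^{\alpha(q)}$ uniformly in $k$. The triangle inequality in $L^q$, followed by Cauchy--Schwarz on the $\sim\rho$ summands and Plancherel in $L^2$ (using that the bands $I_k$ are disjoint), then produces the factor $\sqrt\rho\,\la^{\alpha(q)}$ asserted by \eqref{2.21}.

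For \eqref{2.22} I would run the same argument with sub-bands of width $\sim\el$, giving $N\sim\rho/\el$ pieces centered at points $t_k\in[\la-\rho,\la+\rho]$. Applying \eqref{1} at $t_k$ bounds each sub-band projector by $\sqrt{\e(t_k)}\,B(t_k)$, and the slow-variation assumptions \eqref{a} and \eqref{2'}, combined with $t_k\asymp\la$, give $\e(t_k)\asymp\el$ and $B(t_k)\lesssim B(\la)$; hence each sub-band contributes $\sqrt{\el}\,B(\la)$. The Cauchy--Schwarz combination then yields $\sqrt N\cdot\sqrt{\el}\,B(\la)\lesssim\sqrt\rho\,B(\la)$. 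In the boundary case $\rho\asymp\la$, where some sub-bands may lie near frequency $0$, one can instead bound those portions by Sobolev embedding, and the lower bound \eqref{aa} on $B$ ensures that the Sobolev contribution is absorbed into the claimed estimate.

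For \eqref{2.23} I would invoke the Sobolev embedding $H^{s}(Y)\hookrightarrow L^q(Y)$ with $s=\dy(\tfrac12-\tfrac1q)$: if $f=\1_{[2^k,2^{k+1}]}(P_Y)f$, then $\|(1-\Delta_Y)^{s/2}f\|_{L^2}\lesssim 2^{ks}\|f\|_{L^2}$, whence $\|f\|_{L^q}\lesssim 2^{ks}\|f\|_{L^2}$. The main technical point across the three estimates is the uniform control of $\e(t_k)$ and $B(t_k)$ needed in \eqref{2.22}, which is precisely what the slow-variation hypotheses \eqref{a} and \eqref{2'} are designed to furnish.
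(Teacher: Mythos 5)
Your argument for \eqref{2.21} is the paper's argument: cover by $O(\rho)$ unit intervals, apply the universal bound \eqref{3} on each, and combine with Cauchy--Schwarz and orthogonality. For \eqref{2.23}, the Sobolev derivation is the standard Bernstein estimate the paper cites; just note that at $q=\infty$ one should appeal to the pointwise Weyl law (i.e., $\sup_x\sum_{\mu_j\in[\la/2,\la]}|e_j(x)|^2\lesssim\la^{\dy}$) rather than to $H^{\dy/2}\hookrightarrow L^\infty$, which fails.

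The one place where your route diverges from the paper and needs repair is \eqref{2.22}. The paper splits on $\rho$, not on frequency: for $\rho\in[\e(\la),1]$ it uses the $\e(\la)$-wide sub-band decomposition exactly as you propose, and this is unproblematic since all centers are then $\asymp\la$. For $\rho\in[1,\la]$ it covers $[\la-\rho,\la+\rho]$ by \emph{unit} intervals $[k,k+1)$ with $k\le\la$, observes that the just-proved small-$\rho$ case gives $\|\1_{[k,k+1)}(P_Y)\|_{L^2\to L^q}\lesssim B(k)$, and that $B(k)\lesssim B(\la)$ for $1\le k\le\la$ is immediate from \eqref{a} with $\theta=k/\la$; Cauchy--Schwarz over the $O(\rho)$ unit intervals then gives $\sqrt\rho\,B(\la)$ without any low-frequency issue. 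Your uniform $\el$-wide decomposition breaks down when $\rho\asymp\la$ precisely because $\e(t_k)$ is not comparable to $\e(\la)$ for $t_k\ll\la$, so $\sqrt{\e(t_k)}B(t_k)\not\lesssim\sqrt{\e(\la)}B(\la)$. Your proposed Sobolev patch is in the right spirit but, as written, is ambiguous: if you bound each low-frequency $\el$-width sub-band by Sobolev and then re-apply Cauchy--Schwarz over the $\sim\la/\el$ of them, you pick up a factor $\sqrt{\la/\el}\cdot\la^{\dy(\frac12-\frac1q)}$, and since $\dy(\tfrac12-\tfrac1q)\ge\alpha(q,\dy)$ always, this is \emph{not} absorbed by \eqref{aa} (indeed it would force $\la\lesssim\el$). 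What does work is to bound the low-frequency block $\1_{[0,\la/2]}(P_Y)$ in one shot by Bernstein, giving $\la^{\dy(\frac12-\frac1q)}\le\la^{\alpha(q,\dy)+\frac12}\lesssim\sqrt\la\,B(\la)$ via \eqref{aa}. Either fix (the paper's unit-interval bootstrap or the direct block bound) closes the gap; your sketch as written leaves the boundary case incomplete.
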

Note that if we let $\rho=1$ in \eqref{2.22}, we have
\begin{equation}
\begin{aligned}\label{bla}
 B(\la)&\gtrsim \bigl\| \, \1_{[\la-1,\la+1]}(P_Y)\, \bigr\|_{L^2(Y)\to L^q(Y)} \\
 & \gtrsim \la^{\alpha(q)},
\end{aligned}
 \end{equation}
 where in the second inequality we used the lower bounds on the spectral projection operator, which holds in the general case(see  \cite{soggefica}).
\begin{proof}
The proofs are well known.
One
 obtains \eqref{2.21} from \eqref{3} by writing $[\la-\rho,\la+\rho]$ as the union of
$O(\rho)$ disjoint intervals $I_k$ of length $1$ or less, each contained in $[0,2\la]$.  By the Cauchy-Schwarz inequality one then has
\begin{align*}\bigl\| \, \1_{[\la-\rho,\la+\rho]}(P_X)f\, \bigr\|_{L^q(X)}
&\lesssim \rho^{1/2} \bigl(\sum_k \bigl\| \, \1_{I_k}(P_X)f\, \bigr\|_{L^q(X)}^2 \, \bigr)^{1/2}
\\
&\lesssim   \rho^{1/2} \, \la^{\alpha(q)}  \bigl(\sum_k \bigl\| \, \1_{I_k}(P_X)f\, \bigr\|_{L^2(X)}^2 \, \bigr)^{1/2}
\\
&\lesssim  \rho^{1/2}\, \la^{\alpha(q)}\|f\|_{L^2(X)},
\end{align*}
using \eqref{3} in the second inequality and
orthogonality in the last one.

To prove \eqref{2.22} we note that our assumptions \eqref{a} and \eqref{2'} imply that $B(\la_1)\approx B(\la_2)$ and $\e(\la_1)\approx \e(\la_2)$ if $\la_1\approx \la_2$.
Taking this into account, if $\rho \in [\e(\la),1]$ one proves \eqref{2.22} by a similar argument used for \eqref{2.21}  if one covers $[\la-\rho,\la+\rho]$ by
$O(\rho/\e(\la))$ disjoint intervals of length $\e(\la)$ or less and uses the fact that \eqref{1} includes a $(\e(\la))^{1/2}$ factor in
the right.  Similarly, if $I_k=[k,k-1)$ with $k\le \la$ then one concludes that
$$\|\1_{I_k}\|_{L^2(Y)\to L^q(Y)} \lesssim B(k) \lesssim B(\la),$$
which can be used with the above argument involving the use of the Cauchy-Schwarz inequality to handle the case where $\rho\in [1,\la]$.

The remaining inequality, \eqref{2.23}, is a standard Bernstein estimate (see \cite{soggefica}).
\end{proof}

Having collected the tools we need, let us state the bounds associated with the various regions \eqref{2.13}--\eqref{2.15} of
$\Ale$ that will give us \eqref{2.11}.

First for all $\e(\la)$ satisfying the conditions in Theorem~\ref{mainthm} we claim that we have
\begin{equation}\label{2.24}
\Bigl\| \, \sum_{\munu \in \Ohigh} a_{ij} \, \eprod \, \Bigr\|_{L^q(X\times Y)}\lesssim  \sqrt{\e(\la)} \, B(\la) \cdot
\la^{\alpha(q)}\, \sqrt{\la} \, \|a\|_{\ell^2},
\end{equation}
\begin{equation}\label{2.24'}
\Bigl\| \, \sum_{\munu \in \Olow} a_{ij} \, \eprod \, \Bigr\|_{L^q(X\times Y)}\lesssim
\la^{\alpha(q)}\, \, \|a\|_{\ell^2},
\end{equation}
as well as
\begin{equation}\label{2.25}
\Bigl\| \, \sum_{\munu \in \Ol} a_{ij} \, \eprod \, \Bigr\|_{L^q(X\times Y)}\lesssim \sqrt{2^\ell  \e(\la)}  \, B(2^{-\ell}\la) \cdot
\la^{\alpha(q)} \, \sqrt{\la 2^{-2\ell}}
\, \, \|a\|_{\ell^2},
\end{equation}
if $ 2^{-\ell}\ge \la^{-\frac12}$.

For remaining pieces $ 2^{-\ell}\le \la^{-\frac12}$, we need to obtain estimates to handle  the two cases where $\e(\la)\le \frac14\cdot\la 2^{-2\ell}$ and $\e(\la)\ge \frac14\cdot\la 2^{-2\ell}$.
For the first case, we shall use the following estimate which is valid for all $\e(\la)$ satisfying \eqref{2'}
\begin{equation}\label{2.26}
\Bigl\| \, \sum_{\munu \in \Ol} a_{ij} \, \eprod \, \Bigr\|_{L^q(X\times Y)}
\lesssim
 \sqrt{2^\ell  \e(\la)}  \, B(2^{-\ell}\la) \cdot
\la^{\alpha(q)} \,
  \|a\|_{\ell^2},
\end{equation}
while for the other case we shall use the fact that we also always have
\begin{equation}\label{2.27}
\Bigl\| \, \sum_{\munu \in \Ol} a_{ij} \, \eprod \, \Bigr\|_{L^q(X\times Y)}
\lesssim \la^{\alpha(q)} \, (\la2^{-\ell})^{\dy(\frac12-\frac1q)}
\|a\|_{\ell^2}.
\end{equation}

Let us now see how the bounds in \eqref{2.24}--\eqref{2.27} yield those in Theorem~\ref{mainthm}.  To use \eqref{2.25} we note
that by \eqref{a} with $\theta =2^{-\ell}$
$$\sqrt{2^\ell \e(\la)} \,  B(2^{-\ell}\la) \, \la^{\alpha(q)} \, \sqrt{\la 2^{-2\ell}}
\lesssim 2^{-\ell/2} \, B(\la) \, \la^{\alpha(q)} \, \sqrt{\la \e(\la)},
$$
and so
\begin{equation}\label{2.28}
\sum_{2^{-\ell}\in [\la^{-\frac12}, \frac14]} \Bigl\|
\, \sum_{\munu \in \Ol  } a_{ij}\exmu\eynu \, \Bigr\|_{L^q(X\times Y)} \lesssim B(\la) \, \la^{\alpha(q)}\,
\sqrt{\la \e(\la)} \, \|a\|_{\ell^2}.
\end{equation}

To use \eqref{2.26} we note
that by \eqref{a} with $\theta =2^{-\ell}$,
$$\sqrt{2^\ell \e(\la)} \,  B(2^{-\ell}\la) \, \la^{\alpha(q)}
\lesssim 2^{\ell/2} \, B(\la) \, \la^{\alpha(q)} \, \sqrt{ \e(\la)},
$$
and so
\begin{equation}\label{2.28'}
\sum_{2^{-\ell}\in [2\la^{-\frac12}\e(\la)^{\frac12}, \la^{-\frac12}]} \Bigl\|
\, \sum_{\munu \in \Ol  } a_{ij}\exmu\eynu \, \Bigr\|_{L^q(X\times Y)} \lesssim B(\la) \, \la^{\alpha(q)}\,
(\la\e(\la))^{1/4} \, \|a\|_{\ell^2},
\end{equation}
which is better than desired since we are assuming $\e(\la)\ge \la^{-1}$.

Finally, if $2^{-\ell}\in [\la^{-1}, 2\la^{-\frac12}\e(\la)^{\frac12}]$, by \eqref{2.27}, we have
\begin{equation}\label{2.28''}
\sum_{2^{-\ell}\in  [\la^{-1}, 2\la^{-\frac12}\e(\la)^{\frac12}]} \Bigl\|
\, \sum_{\munu \in \Ol  } a_{ij}\exmu\eynu \, \Bigr\|_{L^q(X\times Y)} \lesssim  \la^{\alpha(q)} \, (\la\e(\la))^{\frac{\dy}{2}(\frac12-\frac1q)}
\|a\|_{\ell^2}.
\end{equation}
It is straightforward to check that for all $q\ge 2$, $$(\la\e(\la))^{\frac{\dy}{2}(\frac12-\frac1q)}\lesssim  B(\la)\,
\sqrt{\la \e(\la)}, $$ given the fact that $B(\la)\ge \la^{\alpha(q)}$ and $\la^{-1}\le\e(\la)\le 1$. Thus, our proof would
be complete if we could establish \eqref{2.24}--\eqref{2.27}.

To prove the first one, \eqref{2.24}, we note that if $y\in Y$ is fixed, since $\mu_i\le \la$ if $\munu\in \Ohigh$, by
\eqref{2.21} with $\rho=\la$ and orthogonality
\begin{align*}
\Bigl\| \, \sum_{\munu\in\Ohigh} a_{ij}\exmu(\, \cdot\, )\, \eynu(y)\, \Bigr\|_{L^q(X)}
&\lesssim \la^{\alpha(q)+1/2}
\Bigl\| \, \sum_{\munu\in\Ohigh} a_{ij}\exmu(\, \cdot\, )\, \eynu(y)\, \Bigr\|_{L^2(X)}
\\
&=\la^{\alpha(q)+1/2} \,
\Bigl( \, \sum_i \, \Bigl| \, \sum_{\{j: \, \munu\in \Ohigh \}} a_{ij}\eynu(y) \, \Bigr|^2 \, \Bigr)^{1/2}.
\end{align*}
If we take the $L^q(Y)$ norm of the left side and use this inequality along with Minkowski's inequality we conclude that
\begin{equation} \label{2.29}
\aligned
&\Bigl\| \, \sum_{\munu\in\Ohigh} a_{ij}\exmu\eynu \Bigr\|_{L^q(X\times Y)}
\\
&\lesssim \la^{\alpha(q)+1/2}
\Bigl( \, \sum_i \, \Bigl\| \, \sum_{ \{j: \, \munu\in \Ohigh\} } a_{ij}\eynu \, \Bigr\|_{L^q(Y)}^2 \, \Bigr)^{1/2}.
\endaligned
\end{equation}
Since $\nu_j\approx \la$ if $\munu\in \Ohigh$, by \eqref{2.17} and \eqref{2.22} with $\rho=C_0\e(\la)$, we have for each fixed $i$
\begin{align}\label{2.30}
\Bigl\| \, \sum_{ \{j: \, \munu\in \Ohigh\} } a_{ij}\eynu \, \Bigr\|_{L^q(Y)} &\lesssim \sqrt{\e(\la)} B(\la) \,
\Bigl\| \, \sum_{ \{j: \, \munu\in \Ohigh\} } a_{ij}\eynu \, \Bigr\|_{L^2(Y)}
\\
&=\sqrt{\e(\la)} B(\la) \, \Bigl( \, \sum_{\{j: \, \munu\in \Ohigh \} } |a_{ij}|^2\, \Bigr)^{1/2}. \notag
\end{align}
Clearly \eqref{2.29} and \eqref{2.30} imply \eqref{2.24}.

The proof of \eqref{2.25} is similar.  Recall that the nonzero terms involve $\mu_i\in I_\ell$, if $2^{-\ell}\ge\la^{-\frac12}$, then $I_\ell$ is an interval
of length $\rho\le C_0\la 2^{-2\ell}$ as in \eqref{il}.  So, if we use the analog of \eqref{2.21} with this value of $\rho$ and  with
$\la$ replaced by the center of $I_\ell$, we can repeat the proof of \eqref{2.29} to conclude that
\begin{equation} \label{2.31}
\aligned
&\Bigl\| \, \sum_{\munu\in\Ol} a_{ij}\exmu\eynu \Bigr\|_{L^q(X\times Y)}
\\
&\lesssim \la^{\alpha(q)} \, \sqrt{\la 2^{-2\ell}}\,
\Bigl( \, \sum_i \, \Bigl\| \, \sum_{ \{j: \, \munu\in \Ol\} } a_{ij}\eynu \, \Bigr\|_{L^q(Y)}^2 \, \Bigr)^{1/2}.
\endaligned
\end{equation}
Since $\nu_j\approx 2^{-\ell}\la$ if $\munu\in \Ol$, and by \eqref{2'} together with the fact that $2^{-\ell}\ge\la^{-\frac12}$, we have
\begin{equation}\label{condition}
\e(2^{-\ell}\la)\le2^\ell \e(\la)\le 2^{-\ell}\la.
\end{equation}
By \eqref{2.18} and \eqref{2.22} with $\rho=2^\ell \e(\la)$, we can argue
as above to see that for each fixed $i$ we have
\begin{equation}\label{2.32}
\Bigl\| \, \sum_{ \{j: \, \munu\in \Ol\} } a_{ij}\eynu \, \Bigr\|_{L^q(Y)}
\\
\lesssim \big(2^\ell \e(\la)\bigr)^{1/2} \, B(2^{-\ell} \la)\,
\Bigl( \, \sum_{ \{j: \, \munu\in \Ol\} } |a_{ij}|^2 \, \Bigr)^{1/2}.
\end{equation}
By combining \eqref{2.31} and \eqref{2.32} we obtain \eqref{2.25}.

Next, we turn to \eqref{2.26}.  We note that if $2^{-\ell}\le\la^{-\frac12}$, there is a uniform constant $C_0$ so that
\begin{equation}\label{2.33}
\mu_i \in [\la-C_0,\la+C_0], \quad \text{if } \, \, \munu\in \Ol \, \, \, \text{for some }  \, j.
\end{equation}
This just follows from the fact that if $\munu\in \Olow$ then we can write
$\munu=r(\cos\theta,\sin\theta)$ with $0\le \theta \lesssim \la^{-1/2}$ and $r\in [\la-\e(\la),\la+\e(\la)]\subset [\la-1,\la+1]$.
If we use \eqref{2.33} and \eqref{3} we can argue as above to see that
\begin{equation}\label{2.34}
\Bigl\| \, \sum_{\munu\in\Ol} a_{ij}\exmu\eynu \Bigr\|_{L^q(X\times Y)}
\lesssim \la^{\alpha(q)}
\Bigl( \, \sum_i \, \Bigl\| \, \sum_{ \{j: \, \munu\in \Ol\} } a_{ij}\eynu \, \Bigr\|_{L^q(Y)}^2 \, \Bigr)^{1/2}.
\end{equation}
For fixed $\mu_i$,  if $ \munu\in \Ol$, we have $\nu_j\approx 2^{-\ell}\la$, and by  \eqref{2.18},  $\nu_j$ lie in a interval of length $\e(\la)2^\ell$.
Now using \eqref{2'} again together with the fact that $\e(\la)\le \frac14 \la 2^{-2\ell}$, one can see that \eqref{condition} still hold in this case. Thus by  \eqref{2.22} with $\rho=2^\ell \e(\la)$, we can argue
as above to see that for each fixed $i$, we have the analogous inequality as in
\eqref{2.32}, which, combined with \eqref{2.34},  implies \eqref{2.26}.

To prove \eqref{2.27}, if one uses
\eqref{2.23} then we find we can replace \eqref{2.32} with
$$
\Bigl\| \, \sum_{ \{j: \, \munu\in \Ol\} } a_{ij}\eynu \, \Bigr\|_{L^q(Y)}
\lesssim (\la2^{-\ell})^{\dy(\frac12-\frac1q)}   \,
\Bigl( \, \sum_{ \{j: \, \munu\in \Ol\} } |a_{ij}|^2 \, \Bigr)^{1/2},
$$
this along with \eqref{2.34} yields \eqref{2.27}.

The proof of \eqref{2.24'} is similar. Since in this case, there is a uniform constant $C_0$ so that
\begin{equation}\label{2.33'}
\mu_i \in [\la-C_0,\la+C_0], \quad \text{if } \, \, \munu\in \Olow \, \, \, \text{for some }  \, j.
\end{equation}
Thus \eqref{2.34} still holds in this case, and
 by \eqref{2.23}, we can replace
 \eqref{2.32} with
\begin{equation}\label{2.35}
\Bigl\| \, \sum_{ \{j: \, \munu\in \Olow\} } a_{ij}\eynu \, \Bigr\|_{L^q(Y)}
\lesssim
\Bigl( \, \sum_{ \{j: \, \munu\in \Olow\} } |a_{ij}|^2 \, \Bigr)^{1/2}.
\end{equation}
this along with \eqref{2.34} yields \eqref{2.24'}.

\subsubsection{{Some applications}}

Let us now show that for products of round spheres $S^{d_1}\times S^{d_2}$ one can obtain power improvements
over the universal bounds in \cite{sogge88} for {\em all} exponents $2<q\le \infty$.  This generalizes the
$L^\infty$ improvements of Iosevich and Wyman~\cite{IoWy}.
Using our improved $L^q$-estimates we can also obtain
improved bounds for large exponents for products of the form $S^{d_1}\times S^{d_2}\times M^n$ where $M^n$
is an arbitrary compact manifold of dimension $n$.  If $M^n$ is a product of spheres and $q=\infty$ the bounds agree
with the ones that are implicit in Iosevich and Wyman~\cite{IoWy}.

\begin{theorem}\label{spthm}  Suppose that $d_1, d_2\ge1$.  Then for all $\e>0$ we have the following estimates for
eigenfunctions on $S^{d_1}\times S^{d_2}$
\begin{equation}\label{a.1}
\|e_\la\|_{L^q(S^{d_1}\times S^{d_2})} \le C_\e \,
\la^{\alpha(q,d_1)+\alpha(q,d_2)+\e} \, \|e_\la\|_{L^2(S^{d_1}\times S^{d_2})}, \quad
2<q\le \infty,
\end{equation}
where
\begin{equation}\label{a.2}
\alpha(q,d)=\max\bigl( \, d(\tfrac12-\tfrac1q)-\tfrac12, \, \tfrac{d-1}2(\tfrac12-\tfrac1q)\, \bigr)
\end{equation}
is the $\lambda$-exponent in the $d$-dimensional universal bounds.
\end{theorem}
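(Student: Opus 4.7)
My plan is to bypass Theorem~\ref{mainthm} and exploit directly the arithmetic rigidity of the joint spectrum of $P$ on $S^{d_1}\times S^{d_2}$. Denote by $\mu_k=\sqrt{k(k+d_1-1)}$ and $\nu_l=\sqrt{l(l+d_2-1)}$ the distinct eigenvalues of $P_{S^{d_1}}$ and $P_{S^{d_2}}$. Any eigenfunction $e_\la$ with $-\Delta e_\la=\la^2 e_\la$ decomposes orthogonally as
\[
e_\la=\sum_{(k,l)\in S(\la)}u_{k,l},\qquad S(\la):=\{(k,l):\mu_k^2+\nu_l^2=\la^2\},
\]
where each $u_{k,l}$ lies in the joint eigenspace $H_k\otimes K_l$, with $H_k$, $K_l$ the eigenspaces of $P_{S^{d_1}}$, $P_{S^{d_2}}$ for $\mu_k$, $\nu_l$. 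By $L^2$-orthogonality, $\|e_\la\|_{L^2}^2=\sum_{(k,l)\in S(\la)}\|u_{k,l}\|_{L^2}^2$.

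The first step is to establish the tensor-product estimate
\[
\|u_{k,l}\|_{L^q(S^{d_1}\times S^{d_2})}\le C\,\mu_k^{\alpha(q,d_1)}\,\nu_l^{\alpha(q,d_2)}\,\|u_{k,l}\|_{L^2},\qquad q\ge 2.
\]
To prove it I would expand $u_{k,l}$ in a tensor-product orthonormal basis of $H_k\otimes K_l$, apply the universal sphere bound \eqref{3}--\eqref{4} on $S^{d_2}$ to $u_{k,l}(x,\cdot)\in K_l$ pointwise in $x$, then use Minkowski's inequality (valid since $q/2\ge 1$) to bring the $L^q_x$-norm inside the $\ell^2_j$-sum of coefficients, and finally apply the sphere bound on $S^{d_1}$ to each coefficient viewed as an element of $H_k$. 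Combining this with the triangle and Cauchy--Schwarz inequalities, together with $\mu_k,\nu_l\le\la$ and $\alpha(q,d_i)\ge 0$, yields
\[
\|e_\la\|_{L^q(S^{d_1}\times S^{d_2})}\le|S(\la)|^{1/2}\,\la^{\alpha(q,d_1)+\alpha(q,d_2)}\,\|e_\la\|_{L^2}.
\]

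The remaining and main task is the bound $|S(\la)|=O_\e(\la^\e)$. The substitution $m=2k+d_1-1$, $n=2l+d_2-1$ rewrites $\mu_k^2+\nu_l^2=\la^2$ as $m^2+n^2=4\la^2+(d_1-1)^2+(d_2-1)^2$, so $|S(\la)|$ is controlled by the number of representations of a specific positive integer as an ordered sum of two integer squares. The classical divisor-type bound $r_2(N)=O_\e(N^\e)$ then gives $|S(\la)|=O_\e(\la^{2\e})$, and relabeling $\e$ yields the asserted exponent. I note in passing that Theorem~\ref{mainthm} alone is insufficient here: on $S^{d_2}$ the zonal harmonics force $\sqrt{\e(\la)}\,B(\la)\gtrsim\la^{\alpha(q,d_2)}$ in \eqref{1} for every admissible $\e(\la)$, so \eqref{m.1} merely reproduces the universal bound $\la^{\alpha(q,d_1+d_2)}$ on the product; the genuine gain comes solely from the arithmetic counting of $S(\la)$.
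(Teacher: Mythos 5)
Your proof is correct and is essentially the same as the paper's: decompose $e_\la$ over the joint eigenspaces with $\mu_k^2+\nu_l^2=\la^2$, apply the universal sphere bound on each factor (Minkowski in between), and control the number of summands by the $r_2(N)=O_\e(N^\e)$ divisor bound via Cauchy--Schwarz. The only cosmetic differences are that the paper works with the shifted operator $\sqrt{-\Delta+(\tfrac{d_1-1}{2})^2+(\tfrac{d_2-1}{2})^2}$ so that the eigenvalues are literally norms of shifted lattice points, whereas you work with $\sqrt{-\Delta}$ and perform the substitution $m=2k+d_1-1$, $n=2l+d_2-1$ explicitly, and that you apply the sphere estimate on $S^{d_2}$ first rather than $S^{d_1}$.
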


In order to use Theorem~\ref{mainthm} to obtain bounds for product manifolds involving $\sprod$ we note that the distinct
eigenvalues of $P=\sqrt{-\Delta_{\sprod}}$ are of the form
$$\sqrt{(k+(d_1-1)/2)^2 +(\ell +(d_2-1)/2)^2} \quad \text{ with  } \, \, k,\ell =0,1,2,\dots .$$
Consequently the gap between successive distinct eigenvalues which are comparable to $\la$ must be larger
than a fixed multiple of $\la^{-1}$.
 So, \eqref{a.1} implies that we also have the corresponding bounds for the spectral projection
operators onto windows of length $\e(\la)=\la^{-1}$:
\begin{equation}
\label{a.1'}\tag{2.35$'$}
\aligned
&\bigl\|  \ola(P)\, \bigl\|_{L^2(\sprod) \to L^q(\sprod)} \le C_\e
\la^{\alpha(q,d_1)+\alpha(q,d_2)+\e}, \, \, \forall \, \e>0,
\\
&\text{if } \, \, 2<q\le \infty \, \, \, \text{and } \, \, P=\sqrt{-\Delta_{\sprod}}.
\endaligned
\end{equation}

A calculation shows that for {\em all} $q>2$
$$\alpha(q,d_1)+\alpha(q,d_2)<\alpha(q,d_1+d_2).$$
  Thus \eqref{a.1'} says that on $\sprod$ one has power improvements over the universal bounds
for manifolds of dimension $d=d_1+d_2$ (but with $\e(\la)\equiv 1$).   Also, by considering tensor products of sphercial
harmonics that saturate the $L^q(S^{d_j})$, $j=1,2$, bounds (see e.g. \cite{sogge86}) one sees that \eqref{a.1} and
hence \eqref{a.1'} are optimal (up to possibly the $\la^\e$ factor).

Let us now single out a couple of special cases of \eqref{a.1'}.

First, we have
\begin{equation}
\label{a.3}
\aligned
&\bigl\| \, \ola(P)\, \bigr\|_{L^2(\sprod)\to L^q(\sprod)} \le C_\e
\la^{d(\frac12-\frac1q)-\frac12} \, \la^{-\frac12+\e}, \, \, \forall \, \, \e>0,
\\
&\text{if } \, \, d=d_1+d_2 \, \, \, \text{and } \, \,
q\ge \max \bigl( \, \tfrac{2(d_1+1)}{d_1-1}, \, \tfrac{2(d_2+1)}{d_2-1}\, \bigr),
\endaligned
\end{equation}
which is a $\la^{-\frac12+\e}$ improvement for this range of exponents in dimension $d$ versus the universal bounds.
This is optimal in the sense that no bounds of this type may hold on {\em any} manifold of dimension
$d$ with $\la^{-\frac12+\e}$ replaced by $\la^{-\frac12-\delta}$ for some $\delta>0$.  For, by Bernstein inequalities, such
an estimate would imply that the above spectral projection operators map $L^2\to L^\infty$ with norm
$O(\la^{\frac{d}2-1-\delta})$.  This cannot hold in dimension $d$ since it would imply that the number of eigenvalues of the square root
of minus the Laplacian
counted with multiplicity which are in subintervals of length $\la^{-1}$
in $[\la/2,\la]$ would be $O(\la^{d-2-\delta})$, and this would contradict the Weyl formula for $P$.

Second, we note that if $d_1=d_2=1$ and $q=\infty$, then \eqref{a.3} is just
\begin{equation}\label{a.4}
\bigl\| \, \ola(\sqrt{-\Delta_{{\mathbb T}^2})} \, \bigr\|_{L^2({\mathbb T}^2) \to L^\infty({\mathbb T}^2)}
=O(\la^\e), \, \, \forall \e>0.
\end{equation}
This is equivalent to the classical fact that the number of integer lattice points on $\la \cdot S^1$ is $O(\la^\e)$, i.e.,
\begin{equation}\label{a.4'} \tag{2.38$'$}
\#  \{j\in {\mathbb Z}^2: \, |j|=\la\} =O(\la^\e) \quad  \forall \, \e>0.
\end{equation}
We shall use this bound in our proof of Theorem~\ref{spthm}.

Before proving this result, let us show how we can use the bounds in Theorem~\ref{mainthm} to obtain a
couple of corollaries.

The first says that sufficiently large exponents we can obtain power improvements of the universal bounds
for products involving $\sprod$.

\begin{corr}\label{cor1}
Let $M^n$ be a compact manifold of dimension $n\ge1$ and consider the product
manifold $\sprod\times M^n$ where $d_1,d_2\ge 1$.  Then if
$$P=\sqrt{-(\Delta_{S^{d_1}}+\Delta_{S^{d_2}}+\Delta_{M^n})},$$
we have
\begin{equation}
\label{a.5}
\aligned
&\bigl\| \, \ola(P)\, \bigr\|_{L^2(\sprod\times M^n)\to L^q(\sprod\times M^n)}
\le C_\e \la^{d(\frac12-\frac1q)-\frac12} \, \la^{-\frac12+\e},
\\
&\forall \e>0, \, \, \, \text{if } \, \, \, d=d_1+d_2+n \, \, \,
\text{and } \, \, q\ge \max\bigl( \, \tfrac{2(d_1+1)}{d_1-1}, \, \tfrac{2(d_2+1)}{d_2-1},
\, \tfrac{2(n+1)}{n-1} \, \bigr).
\endaligned
\end{equation}
Furthermore, we have
\begin{equation}\label{a.6}
\bigl\| \, \ola(P)\, \bigr\|_{L^2(\sprod\times M^n)\to L^q(\sprod\times M^n)}
\le C_q \la^{d(\frac12-\frac1q)-\frac12 -\delta} ,
\end{equation}
for some $\delta=\delta(q,d_1,d_2,n)>0$ if $q>\tfrac{2(d+1)}{d-1}$, $d=d_1+d_2+n$.
\end{corr}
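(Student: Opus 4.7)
The plan is to apply Theorem~\ref{mainthm} with $Y = \sprod$ and $X = M^n$, choosing $\e(\la) = \la^{-1}$ and using \eqref{a.1'} --- which comes from Theorem~\ref{spthm} together with the $\gtrsim \la^{-1}$ spacing of successive distinct eigenvalues of $P_{\sprod}$ near level $\la$ --- as the input bound \eqref{1} on $Y$. This identification forces
\[
B(\la) \;=\; C_\e\, \la^{\frac12 + \alpha(q, d_1) + \alpha(q, d_2) + \e}.
\]

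Before invoking Theorem~\ref{mainthm} I verify its hypotheses: condition \eqref{2} is trivial since $t\e(t) \equiv 1$, condition \eqref{a} is trivial since $B$ is a power of $\la$, and condition \eqref{aa} --- the lower bound $B(\la) \gtrsim \la^{\alpha(q, d_1 + d_2)}$ --- reduces to a short regime-by-regime comparison of the high and low formulas for $\alpha$ in dimensions $d_1$, $d_2$, and $d_1+d_2$, with the extra $\la^{1/2}$ in $B$ absorbing any low-regime deficit.

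With the hypotheses in hand, Theorem~\ref{mainthm} yields
\[
\bigl\|\ola(P)\bigr\|_{L^2(\sprod \times M^n) \to L^q(\sprod \times M^n)}
\;\lesssim\; C_\e\, \la^{\alpha(q, d_1) + \alpha(q, d_2) + \alpha(q, n) + \frac12 + \e}.
\]
For \eqref{a.5}, the hypothesis $q \ge \max(q_c(d_1), q_c(d_2), q_c(n))$ puts each $\alpha(q, d_i)$ in the high form $d_i(\tfrac12 - \tfrac1q) - \tfrac12$, so the exponent telescopes to $d(\tfrac12 - \tfrac1q) - 1 + \e$, which is exactly $\la^{d(\frac12 - \frac1q) - \frac12}\cdot \la^{-\frac12 + \e}$ as claimed.

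For \eqref{a.6} the key elementary fact is that $\alpha(q, d_i) \le d_i(\tfrac12 - \tfrac1q) - \tfrac12$ for every $q > 2$, with equality iff $q \ge q_c(d_i)$; summing the three bounds shows my exponent is at most $d(\tfrac12 - \tfrac1q) - 1 + \e$. Since $q > q_c(d)$ places the ambient dimension in the high regime so that $\alpha(q, d) = d(\tfrac12 - \tfrac1q) - \tfrac12$, picking any $\e \in (0, \tfrac12)$ delivers \eqref{a.6} with $\delta = \tfrac12 - \e > 0$. The main (and rather minor) obstacle is bookkeeping across the various regimes, particularly in the intermediate range $q \in (q_c(d), \max_i q_c(d_i))$ where some individual factors sit in their low regimes; but there the actual $\alpha(q, d_i)$ is strictly smaller than its high-regime continuation, which only improves the situation --- both the lower bound in \eqref{aa} and the final exponent comparison close comfortably.
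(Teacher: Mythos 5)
Your derivation of \eqref{a.5} is correct and follows the paper's route exactly: apply Theorem~\ref{mainthm} with $Y=\sprod$, $X=M^n$, $\e(\la)=\la^{-1}$, and $B(\la)=\la^{\alpha(q,d_1)+\alpha(q,d_2)+\frac12+\e}$, using \eqref{a.1'} as the input bound; for $q\ge\max(q_c(d_1),q_c(d_2),q_c(n))$ each $\alpha(q,d_i)$ is in its high form and the exponents telescope. (Checking \eqref{aa} by hand is harmless but unnecessary; the paper shows in \eqref{bla} that it follows automatically from \eqref{1}.)

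The argument you give for \eqref{a.6}, however, rests on a sign error that makes it fail. You assert that $\alpha(q,d_i)\le d_i(\tfrac12-\tfrac1q)-\tfrac12$ for all $q>2$ with equality iff $q\ge q_c(d_i)$; in fact the inequality goes the other way. Since $\alpha(q,d_i)=\max\bigl(d_i(\tfrac12-\tfrac1q)-\tfrac12,\,\tfrac{d_i-1}2(\tfrac12-\tfrac1q)\bigr)$, one has $\alpha(q,d_i)\ge d_i(\tfrac12-\tfrac1q)-\tfrac12$, with equality precisely when $q\ge q_c(d_i)$ and strict inequality (the subcritical branch is \emph{larger}) when $q<q_c(d_i)$. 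So when $q>q_c(d)$ but some $q<q_c(d_i)$ — a nonempty range, since each $d_i<d$ implies $q_c(d_i)>q_c(d)$ — the exponent that Theorem~\ref{mainthm} delivers, namely $\alpha(q,d_1)+\alpha(q,d_2)+\alpha(q,n)+\tfrac12+\e$, is strictly \emph{larger} than $d(\tfrac12-\tfrac1q)-1+\e$, not smaller, and your claimed $\delta=\tfrac12-\e$ does not follow. Worse, the direct bound from Theorem~\ref{mainthm} can fail to improve on the universal estimate at all: take $d_1=d_2=n=1$, so $d=3$, $\alpha(q,1)\equiv0$, and $q_c(3)=4$. Theorem~\ref{mainthm} then yields $\la^{\frac12+\e}$, whereas the universal bound is $\la^{\alpha(q,3)}$ with $\alpha(q,3)=3(\tfrac12-\tfrac1q)-\tfrac12$, which is $\tfrac14$ at $q=4$ and stays below $\tfrac12$ until $q=6$. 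For $q\in(4,6)$ your bound is strictly worse than the trivial one, so no positive $\delta$ can be extracted this way. This is precisely why the paper instead proves \eqref{a.6} by interpolating \eqref{a.5} (valid for $q\ge\max_i q_c(d_i)$) against the universal bound at $q=q_c(d)$, where $\delta=0$; the Riesz--Thorin interpolation produces a $q$-dependent $\delta=\theta(q)\cdot(\tfrac12-\e)>0$ that degenerates as $q\downarrow q_c(d)$, which is all that \eqref{a.6} asserts. To fix your write-up, replace the (incorrect) pointwise comparison of exponents by this interpolation step.
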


To prove these to bounds we note that for $q$ as in \eqref{a.5} we have
$$\alpha(q,d_1)+\alpha(q,d_2)+\alpha(q,n)+\tfrac12 = d(\tfrac12-\tfrac1q)-1.$$
Consequently, \eqref{a.5} follows immediately from \eqref{a.1} and \eqref{m.1} with
$\e(\la)=\la^{-1}$ and
$$B(\la)=\la^{(d_1+d_2)(\frac12-\frac1q)-\frac12+\e}.$$
Since \eqref{a.5} is a power improvement over $O(\la^{d(\frac12-\frac1q)-\frac12})$ bounds for large exponents
and the universal bounds imply that \eqref{a.6} is valid when $\delta=0$ and
$q=q_c(d)=\tfrac{2(d+1)}{d-1}$, one obtains \eqref{a.6} via a simple interpolation argument.

A calculation show that we cannot use use Theorem~\ref{mainthm} to obtain improvements over the universal bounds
when $q\in (2, q_c(d)]$ with $q_c(d)$ as above being the critical exponent.  We should
point our that Canzani and Galkowski~\cite{CGBeams} recently obtained $\sqrt{\log\la}$-improvements over the universal
bounds (with $\e(\la)=(\log \la)^{-1}$) for arbitrary products of manifolds and $q>q_c(d)$.  They as well as
Iosevich and Wyman~\cite{IoWy} conjectured that for such manifolds appropriate power improvements over
the universal bounds should always be possible.  Obtaining any improvements for
$q\in (2,q_c(d)]$, though, appears difficult except in special cases such as for products involving products
of spheres as above.  Perhaps, though, the Kakeya-Nikodym approach that was used in \cite{BlairSoggeToponogov} and \cite{SBLog} to obtain
log-power improvements of eigenfunction estimates for manifolds of nonpositive sectional curvature  could be used
to handle critical and subcritical exponents.

Let us also state one more corollary which generalizes the well known higher dimensional version of
\eqref{a.4'}:
\begin{equation}\label{a.7}
\#\{j\in {\mathbb Z^n}: \, |j|=\la\} =O(\la^{n-2+\e}) \, \, \forall \, \e>0 \, \,
\text{if } \, \, n\ge3.
\end{equation}
Just as for the special case where $n=2$ discussed above, this is easily seen to be equivalent  to
the following sup-norm bounds
\begin{equation}\label{a.7'}\tag{2.41$'$}
\bigl\| \ola(\sqrt{-\Delta_{{\mathbb T}^n}} ) \, \bigl\|_{L^2({\mathbb T}^n)\to L^q({\mathbb T}^n)}
=O(\la^{\frac{n-1}2} \, \la^{-\frac12+\e})
\, \, \forall \, \e>0 \, \, \text{if } \, n\ge3.
\end{equation}

If we use Theorem~\ref{mainthm} for $q=\infty$ with $\e(\la)=\la^{-1}$ and
$B(\la)=\la^{\frac{n-1}2+\e}$ we can argue as above to obtain the following generalization of \eqref{a.7'}.

\begin{corr}\label{cor2}  Let $M^{n-2}$ be a compact Riemannian manifold of dimension $n-2$ where
$n\ge3$.  Then if
$$P=\sqrt{-(\Delta_{{\mathbb T}^2}+\Delta_{M^{n-2}})}$$
is the square root of minus the Laplacian on the $n$-dimensional product manifold ${\mathbb T}^2\times M^{n-2}$ we have
\begin{equation}\label{a.8}
\bigl\| \ola(P ) \, \bigl\|_{L^2({\mathbb T}^2 \times M^{n-2})\to L^q({\mathbb T}^2 \times M^{n-2})}
=O(\la^{\frac{n-1}2} \, \la^{-\frac12+\e})
\, \, \forall \, \e>0.
\end{equation}
Consequently if $0=\la_0\le \la_1\le \la_2\le \dots$ are the eigenvalues of $P$
\begin{equation}\label{a.9}
\#\{\la_j\in [\la-\la^{-1},\la+\la^{-1}]\} =O(\la^{n-2+\e}), \, \, \forall \, \e>0.
\end{equation}
\end{corr}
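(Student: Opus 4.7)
The plan is to apply Theorem \ref{mainthm} with the specific choices $Y = {\mathbb T}^2$, $X = M^{n-2}$, $q = \infty$, and $\e(\la) = \la^{-1}$, extracting $B(\la)$ from the classical lattice-point estimate recorded in \eqref{a.4}. Since \eqref{a.4} states that $\|\ola(\sqrt{-\Delta_{{\mathbb T}^2}})\|_{L^2({\mathbb T}^2) \to L^\infty({\mathbb T}^2)} = O(\la^\e)$, and since the assumption \eqref{1} with $\e(\la) = \la^{-1}$ and $q=\infty$ requires this operator norm to be bounded by $\la^{-1/2}\,B(\la)$, the natural choice is $B(\la) = C_\e \la^{1/2+\e}$.

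Next I would verify the structural hypotheses on the pair $(B,\e)$. The monotonicity \eqref{2} is immediate for $\e(\la) = \la^{-1}$ because $t\e(t) \equiv 1$; the doubling condition \eqref{a} is obvious for a power of $\la$; and the lower bound \eqref{aa} reduces to $B(\la) \gtrsim \la^{\alpha(\infty, 2)} = \la^{1/2}$, which is satisfied with $\la^\e$ to spare. With $d_X = n-2$ and $\alpha(\infty, n-2) = (n-3)/2$ for $n \ge 3$, the conclusion \eqref{m.1} of Theorem \ref{mainthm} then yields
\begin{equation*}
\bigl\|\ola(P)\bigr\|_{L^2 \to L^\infty} \lesssim \sqrt{\e(\la)}\,B(\la)\,\la^{\alpha(\infty, n-2) + 1/2} = \la^{-1/2}\cdot \la^{1/2+\e}\cdot \la^{(n-2)/2} = C_\e \la^{(n-1)/2 - 1/2 + \e},
\end{equation*}
which is exactly \eqref{a.8}.

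The deduction of \eqref{a.9} from \eqref{a.8} is the standard reproducing-kernel argument. Let $K(x, y) = \sum_j e_j(x) \overline{e_j(y)}$ be the Schwartz kernel of the self-adjoint projection $\ola(P)$, with the sum ranging over eigenvalues $\la_j \in [\la - \la^{-1}, \la + \la^{-1}]$. For fixed $x_0$, the function $f(x) = K(x, x_0)$ lies in the range of the projection, so $\ola(P) f = f$; moreover $f(x_0) = K(x_0, x_0) = \|f\|_{L^2}^2$, and consequently
\begin{equation*}
K(x_0, x_0) = f(x_0) \le \|f\|_{L^\infty} \le \|\ola(P)\|_{L^2 \to L^\infty}\,\|f\|_{L^2} = \|\ola(P)\|_{L^2 \to L^\infty}\,\sqrt{K(x_0, x_0)}.
\end{equation*}
Thus $K(x_0, x_0) \le \|\ola(P)\|_{L^2 \to L^\infty}^2 = O(\la^{n-2+2\e})$, and integrating in $x_0$ over ${\mathbb T}^2 \times M^{n-2}$ yields \eqref{a.9}, after renaming $2\e$ as $\e$. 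There is no genuine obstacle in this chain of reasoning: the substantive analytic work is already encoded in Theorem \ref{mainthm} and in the lattice-point estimate \eqref{a.4'}, and what remains is only the bookkeeping of exponents and the textbook Bernstein/kernel-trace conversion from an $L^2\to L^\infty$ spectral cluster bound to an eigenvalue counting bound.
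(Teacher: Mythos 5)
Your proof is correct and follows essentially the same route as the paper: feed the ${\mathbb T}^2$ estimate \eqref{a.4} into Theorem~\ref{mainthm} with $Y={\mathbb T}^2$, $X=M^{n-2}$, $q=\infty$, $\e(\la)=\la^{-1}$, $B(\la)=\la^{1/2+\e}$, and then pass from the $L^2\to L^\infty$ bound \eqref{a.8} to the counting bound \eqref{a.9} by the standard reproducing-kernel argument, which the paper simply cites rather than spells out. Your choice $B(\la)=\la^{1/2+\e}$ is the one actually consistent with \eqref{a.4}; the sentence preceding the corollary in the paper writes $B(\la)=\la^{(n-1)/2+\e}$, which appears to be a slip, so your bookkeeping is if anything cleaner than the source.
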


The first estimate, \eqref{a.8}, follows from \eqref{a.4} and Theorem~\ref{mainthm}.  As is well known
(see e.g., \cite{SoggeHangzhou}) it implies the counting bounds \eqref{a.9}.  These are optimal, since
as we discussed before, $O(\la^{n-2-\delta})$ with $\delta>0$ bounds cannot hold due to the Weyl formula.

One can also obtain power improvements for products $X\times {\mathbb T}^n$
 using the following ``discrete restriction theorem''   of Bourgain and Demeter \cite{BourgainDemeterDecouple}
 (toral eigenfunction bounds):
 \begin{equation}\label{9}
 \bigl\| \1_{[\la-\la^{-1}, \la+\la^{-1}]}\, (\sqrt{-\Delta_{{\mathbb T}^n}})\,
 \bigr\|_{L^2({\mathbb T}^n)\to L^{\frac{2(n+1)}{n-1}}({\mathbb T}^n)} \lesssim \la^{\e}, \, \, \, \forall \, \e>0.
 \end{equation}
 This represents a $1/q_c$--power improvement over the universal estimates \cite{sogge88} with $q_c=\frac{2(n+1)}{n-1}$.
 Similar to the case above in Corollary~\ref{cor1} if we use \eqref{m.1}
 with $B(\la)= \la^{n(\frac12-\frac1q)-\frac1{q_c}+\e}$, we obtain from \eqref{9} that
 if $P=\sqrt{-(\Delta_X+\Delta_{{\mathbb T}^n})}$ then
\begin{equation}\label{10}
\aligned
&\bigl\| \, \1_{[\la-\la^{-1}, \, \la+\la^{-1}]}(P) \, \bigr\|_{L^2(X\times {\mathbb T}^n)\to L^q(X\times {\mathbb T}^n)}
\lesssim \la^{d(\frac12-\frac1q)-1/2-1/q_c+\e}, \, \, \forall \, \e>0,
\\
&\text{if } \, \, q_c=\frac{2(n+1)}{n-1},\ \text{ and }\ d=\dx +n,
\endaligned
\end{equation}
and
\begin{equation}\label{11}
q\ge \max\bigl( \, \frac{2(n+1)}{n-1}, \, \frac{2(\dx+1)}{\dx-1}\, \bigr).
\end{equation}
It is conjectured that \eqref{9} should also be valid when $\tfrac{2(n+1)}{n-1}$ is replace by the larger exponent $\tfrac{2n}{n-2}$,
which would represents the optimal $\la^{-1/2+\e}$ improvement of the universal bounds in \cite{sogge88}.  If this result
held, then one would obtain the optimal bounds where in the exponent in \eqref{10} $-1/q_c$ is replaced by $-1/2$, which
would be optimal, as well as the range of exponents in \eqref{11}.

Also, using results of Hickman \cite{Hickman} and Germain and Myerson~\cite{Germain} one can also obtain improved spectral projection bounds when
$\el =\la^{-\sigma}$ with $\sigma\in (0,1)$.

Let us now present the proof  of Theorem~\ref{spthm} which in the case of $q=\infty$ strengthens the
bounds that are implicit in Iosevich and Wyman~\cite{IoWy}.

\begin{proof}[Proof of Theorem~\ref{spthm}]
Let $\{e_k^\mu\}_{\mu}$ be an orthonormal basis for spherical
harmonics of degree $k$ on $S^{d_1}$ and
$\{e_\ell^\nu\}_\nu$ be an orthonormal basis of
spherical harmonics of degree $\ell$ on $S^{d_2}$.
Then an orthonormal basis of eigenfunctions on
$S^{d_1}\times S^{d_2}$ is of the form
\begin{equation}\label{00.4}e_k \, e_\ell
\end{equation}
where $e_k=e_k^\nu$ for some $\nu$ and
$e_\ell =e_\ell^\mu$ for some $\mu$.
So,
$$(-\Delta_{d_1}+(\tfrac{d_1-1}2)^2)e_k
=(k+\tfrac{d_1-1}2)^2e_k$$
and
$$(-\Delta_{d_2}+(\tfrac{d_2-1}2)^2)e_\ell
=(\ell+\tfrac{d_1-1}2)^2e_\ell$$
so that for the Laplacian on $S^{d_1}\times S^{d_2}$,
$\Delta=\Delta_{d_1}+\Delta_{d_2}$, we have
$$(-\Delta +(\tfrac{d_1-1}2)^2+(\tfrac{d_2-1}2)^2)e_k
e_\ell
=((k+\tfrac{d_1-1}2)^2+ (\ell+\tfrac{d_1-1}2)^2)e_ke_\ell.$$

Thus if $P=\sqrt{(-\Delta +(\tfrac{d_1-1}2)^2+(\tfrac{d_2-1}2)^2)}$ its eigenvalues are
\begin{equation}\label{00.5}\la=\la_{k,\ell}=\sqrt{((k+\tfrac{d_1-1}2)^2+ (\ell+\tfrac{d_1-1}2)^2)}.\end{equation}
Thus, if $e_\lambda$ as in Theorem~\ref{spthm} is an eigenfunction of
$P$ with this eigenvalue we must have
\begin{equation}\label{00.6}
e_\la(x,y) = \sum_{\{(k,\ell): \, \la_{k,\ell}=\la\}}
\Bigl(\sum_{\mu,\nu} a^{\mu,\nu}_{k,\ell} \, e^{\mu}_k(x)e^{\nu}_\ell(y)\Bigr),
\quad (x,y)\in S^{d_1}\times S^{d_2}.
\end{equation}

Let us now prove \eqref{a.1}.  We note that if $(k,\ell)$
are fixed then, for every fixed $y\in S^{d_2}$, the function on $S^{d_1}$
$$x\to \sum_{\mu,\nu} a^{\mu,\nu}_{k,\ell} \, e^{\mu}_k(x)e^{\nu}_\ell(y)$$
is a spherical harmonic of degree $k$.  Thus by
\cite{sogge86} or \cite{sogge88}
$$\Bigl\| \sum_{\mu,\nu} a^{\mu,\nu}_{k,\ell} \, e^{\mu}_k(\, \cdot \, )e^{\nu}_\ell(y)
\Bigr\|_{L^q(S^{d_1})}
\le C\la^{\alpha(q,d_1)}
\Bigl\| \sum_{\mu,\nu} a^{\mu,\nu}_{k,\ell} \, e^{\mu}_k(\, \cdot \, )e^{\nu}_\ell(y)
\Bigr\|_{L^2(S^{d_1})}.$$
Next,
by Minkowski's inequality and another application of the universal bounds, we obtain from this
\begin{equation}
\label{00.7}
\aligned
\Bigl\| \sum_{\mu,\nu} a^{\mu,\nu}_{k,\ell} \, e^{\mu}_ke^{\nu}_\ell
\Bigr\|_{L^q(S^{d_1}\times S^{d_2})}
&\le C\la^{\alpha(q,d_1)}
\Bigl\| \sum_{\mu,\nu} a^{\mu,\nu}_{k,\ell} \, e^{\mu}_k(x)e^{\nu}_\ell(y)
\Bigr\|_{L^2_xL^q_y(S^{d_1}\times S^{d_2})}
\\
&\lesssim \la^{\alpha(q,d_1)+\alpha(q,d_2)}
\Bigl\| \sum_{\mu,\nu} a^{\mu,\nu}_{k,\ell} \, e^{\mu}_ke^{\nu}_\ell
\Bigr\|_{L^2(S^{d_1}\times S^{d_2})}.
\endaligned
\end{equation}
Since, by \eqref{a.4'}, the number of $\{(k,\ell): \, \la_{k,\ell}=\la\}$
is $O(\la^\e)$ we also have by the Cauchy-Schwarz
inequality that if $e_\la$ is as in \eqref{00.6}
\begin{equation}\label{00.8}
|e_\la(x,y)|
\lesssim \la^\e
\Bigl(\sum_{\{(k,\ell): \, \la_{k,\ell}=\la\}}\Bigl|
\Bigl(\sum_{\mu,\nu} a^{\mu,\nu}_{k,\ell} \, e^{\mu}_k(x)e^{\nu}_\ell(y)\Bigr)\Bigr|^2\, \Bigr)^{1/2}.
\end{equation}
Thus, by \eqref{00.7}--\eqref{00.8}
\begin{equation}
\label{00.9}
\aligned
&\|e_\la\|_{L^q(S^{d_1}\times S^{d_2})}
\\
&\lesssim  \la^{\alpha(q,d_1)+\alpha(q,d_2)+\e} \cdot
\Bigl(\sum_{\{(k,\ell): \, \la_{k,\ell}=\la\}}
\Bigl\| \sum_{\mu,\nu} a^{\mu,\nu}_{k,\ell} \, e^{\mu}_ke^{\nu}_\ell
\Bigr\|_{L^2(S^{d_1}\times S^{d_2})}^2\Bigr)^{1/2},
\endaligned
\end{equation}
which leads to \eqref{a.1} since, by orthogonality,
the last factor in \eqref{00.9} is $\|e_\la\|_{L^2}$.
\end{proof}

\newsection{Improved Weyl formulae}

To prove Theorem~\ref{weylthm},
we first observe that if as above
$\mu_i^2$ are the eigenvalues of $-\Delta_X$ then by \eqref{3.2} we have
\begin{equation}\label{3.4}
N(X\times Y, \la)=\sum_{\mui\le \la} \,
\Bigl[ (2\pi)^{-d_Y} \omega_{d_Y} (\text{Vol}\,Y) \bigl(\la^2-\mui^2\bigr)^{d_Y/2} +R_Y\bigl(\sqrt{\la^2-\mui^2}\bigr) \, \Bigr].
\end{equation}
We can estimate the last sum using \eqref{3.2} and \eqref{2'}:
\begin{align*}
R_\la& = \sum_{\mui\le \la} R_Y\bigl(\sqrt{\la^2-\mui^2}\bigr)
\\ &\lesssim \sum_{\mui\le \la} \e\bigl(\sqrt{\la^2-\mui^2}\bigr) \cdot \bigl(\la^2-\mui^2\bigr)^{\frac{d_Y-1}2}
\\
&\lesssim \la^{d_Y-1}  \sum_{\mui\le \la}  \e\bigl(\la\cdot \sqrt{1-\mui^2/\la^2} \bigr) \cdot \sqrt{1-\mui^2/\la^2}
\times \bigl(1-\mui^2/\la^2\bigr)^{\frac{d_Y-2}2}.
\end{align*}
Since $d_Y\ge2$, we have
$$\bigl(1-\mui^2/\la^2\bigr)^{\frac{d_Y-2}2}\le 1, \quad \text{if } \, \, \, \mui\le \la.$$
Thus, if we use \eqref{2'} with $\theta = \sqrt{1-\mui^2/\la^2}$ to estimate the terms with
$\sqrt{\la^2-\mui^2}\ge 1$, we get
$$ \e\bigl(\la\cdot \sqrt{1-\mui^2/\la^2} \bigr) \cdot \sqrt{1-\mui^2/\la^2}
\times \bigl(1-\mui^2/\la^2\bigr)^{\frac{d_Y-2}2}\le \e(\la) \quad
\text{if } \, \, \, \sqrt{\la^2-\mui^2}\ge 1.$$
Thus, by \eqref{3.1}.
\begin{equation*}
\sum_{\mui\le \la , \, \sqrt{\la^2-\mui^2}\ge 1} R_Y\bigl(\sqrt{\la^2-\mui^2}\bigr) \lesssim
\la^{d_Y-1}\e(\la) \sum_{\mui\le \la} 1
\lesssim \e(\la) \la^{d_Y-1} \cdot \la^{d_X} = \e(\la) \la^{d-1},
\end{equation*}
with, as before, $d=d_X+d_Y$.
We also need to estimate the terms where $ \sqrt{\la^2-\mui^2}\le 1$.  In this case we just use
that $R_Y(\theta) =O(1)$ if $\theta\le 1$ and so
$$\sum_{\mui\le \la , \, \sqrt{\la^2-\mui^2}\le 1} R_Y\bigl(\sqrt{\la^2-\mui^2}\bigr) \le
\sum_{\mui\le \la}1 \lesssim \la^{d_X} =\la^{d-1}\cdot \la^{1-d_Y},$$
and since $d_Y\ge 2$,
$$\la^{1-d_Y} \le \la^{-1}\le \e(\la).$$

By combining these two estimates we conclude that, if as above, $R_\la$ is the second sum in the right side
of \eqref{3.4} then
$$
R_\la =O(\e(\la) \, \la^{d-1}), \quad d=d_X+d_Y,
$$
as desired.

Based on this, we conclude that the improved Weyl formula \eqref{3.3} would be a consequence of the following
\begin{equation}
\label{3.5}
\aligned
&(2\pi)^{-d_Y} \omega_{d_Y} (\text{Vol}\,Y) \, \la^{d_Y} \cdot \sum_{\mui\le \la} \,
 \bigl(1-\mui^2/\la^2\bigr)^{d_Y/2}
 \\
&=(2\pi)^{-d}\omega_d (\text{Vol}\,Y\cdot\text{Vol}\,X) \la^d +O(\la^{d-2}).
\endaligned
\end{equation}

Note that $\sum_{\mui\le \la} \,
 \bigl(1-\mui^2/\la^2\bigr)^{d_Y/2}
$ is the trace of the kernel of $(1-(P_X)^2/\la^2)_+^{d_Y/2}$, i.e.,
\begin{equation}\label{3.6}
\sum_{\mui\le \la} \,
 \bigl(1-\mui^2/\la^2\bigr)^{d_Y/2}=
 \int_M \sum_{\mui\le \la} \,
 \bigl(1-\mui^2/\la^2\bigr)^{d_Y/2} e_{\mui}^X(x)\overline{e_{\mui}^X(x)} \, dV(x),
\end{equation}
 with $dV$ denoting the volume element on $X$.
For $\delta\ge0$
 \begin{equation}\label{3.7}
 S^\delta_\la(x,y) =\sum_{\mui\le \la} \bigl(1-\mui^2/\la^2\bigr)^{\delta} e_{\mui}^X(x)\overline{e_{\mui}^X(y)},
 \end{equation}
 denotes the kernel of the Bochner-Riesz operators $(1-(P_X)^2/\la^2)_+^\delta$ (see, e.g. \cite{soggefica}).
 Keeping \eqref{3.6} in mind, we claim that \eqref{3.5} (and hence Theorem~\ref{weylthm}) would be
 a consequence of the following pointwise estimates for these kernels restricted to the diagonal in $X\times X$.

 \begin{proposition}\label{traceprop}
 Let $S^\delta_\la$ be as in \eqref{3.7}.  Then if $\delta\ge1$ we have
 \begin{equation}\label{3.8}
 S^\delta_\la(x,x)=(2\pi)^{-d_X}  |S^{d_X-1}| \, \times  \frac12 B(\delta+1, d_X/2) \, \la^{d_X}
 +O(\la^{d_X-2}),
 \end{equation}
 with $|S^{d_X-1}|$ denoting the area of the unit sphere in ${\mathbb R}^{d_X}$ and
 $$B(s,t)=\int_0^1 (1-u)^{s-1} u^{t-1}\, du$$
 being the beta function.
 \end{proposition}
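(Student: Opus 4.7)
The plan is to compute $S^\delta_\la(x,x)$ via the Fourier-side representation using the half-wave propagator of $P_X$, using the $C^\delta$-regularity of the Bochner--Riesz cutoff (with $\delta\ge 1$) to gain the extra power of $\la^{-1}$ over the standard pointwise Weyl bound. The leading term will arise from the principal symbol of the parametrix, and the error estimate will be an exercise in stationary-phase bookkeeping.

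First I would extend $\rho_\la(\mu):=(1-\mu^2/\la^2)_+^\delta$ evenly to $\R$ so that $S^\delta_\la=\rho_\la(P_X)$, and use Fourier inversion
\begin{equation*}
S^\delta_\la(x,x) = \frac{1}{2\pi}\int_\R \hat\rho_\la(t)\,U(t,x,x)\,dt,
\end{equation*}
where $U(t,x,y)$ is the Schwartz kernel of $\cos(tP_X)$. The scaling $\hat\rho_\la(t)=\la\,\hat\rho_1(\la t)$ together with the Bessel representation of $\hat\rho_1$ gives the pointwise decay $|\hat\rho_\la(t)|\lesssim \la\,(1+\la|t|)^{-\delta-1/2}$, which quantitatively records the regularity of the cutoff and is the key quantitative input for what follows.

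Next, pick $\chi\in C_0^\infty(-\eps,\eps)$ equal to $1$ near $0$, with $\eps$ smaller than the injectivity radius at $x$, and split $\hat\rho_\la=\chi\hat\rho_\la+(1-\chi)\hat\rho_\la$. For the small-time piece, insert H\"ormander's geometric-optics parametrix
\begin{equation*}
U(t,x,x) = (2\pi)^{-d_X}\int_{T^*_x X} e^{it|\xi|_x}\bigl(1 + a_1(t,x,\xi) + a_2(t,x,\xi) + \cdots\bigr)\,d\xi,\qquad |t|<\eps,
\end{equation*}
with each $a_j$ homogeneous of degree $-j$ in $\xi$. Exchanging the $t$- and $\xi$-integrations and using $\rho_\la(|\xi|_x)=\tfrac{1}{2\pi}\int\hat\rho_\la(t)e^{it|\xi|_x}dt$ extracts the principal contribution
\begin{equation*}
(2\pi)^{-d_X}\int_{T^*_x X} \rho_\la(|\xi|_x)\,d\xi = (2\pi)^{-d_X}|S^{d_X-1}|\int_0^\la (1-r^2/\la^2)^\delta r^{d_X-1}\,dr,
\end{equation*}
which the substitution $u=r^2/\la^2$ evaluates to the claimed main term $(2\pi)^{-d_X}|S^{d_X-1}|\cdot\tfrac12 B(\delta+1,d_X/2)\,\la^{d_X}$, via the Beta integral $\int_0^1(1-u)^\delta u^{d_X/2-1}du$.

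It remains to bound the error, which has two sources. First, the subprincipal symbols $a_j$ with $j\ge 1$, integrated against $\rho_\la$, produce contributions of size $\la^{d_X-j}$, so the $a_2$ term is already $O(\la^{d_X-2})$; the $a_1$ term has to be shown to vanish, and this follows from the vanishing of the subprincipal symbol of $\sqrt{-\Delta_X}$ in the natural half-density normalization, which is precisely the classical gain underlying H\"ormander's sharp pointwise Weyl estimate. Second, the large-time integral $\int(1-\chi(t))\hat\rho_\la(t)U(t,x,x)dt$ is handled by integrating by parts in $t$ a fixed number of times (trading $\partial_t$ for $P_X$ on $U$), using the $(1+\la|t|)^{-\delta-1/2}$ decay of $\hat\rho_\la$, and reducing to a spectral sum that is controlled by H\"ormander's $O(\la^{d_X-1})$ bound for $e(x,x,\la)$; this yields an $O(\la^{d_X-2})$ contribution. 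The main obstacle is step (i): one must verify with care that the subprincipal contribution does not produce a surviving term of order $\la^{d_X-1}$, which is exactly the classical improvement afforded by integrating the pointwise Weyl formula against a $C^1$ cutoff and is what upgrades the Weyl remainder in Theorem \ref{weylthm} from $O(\la^{d-1})$ to $O(\e(\la)\la^{d-1})$.
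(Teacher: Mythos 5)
Your high-level plan coincides with the paper's: represent $S^\delta_\la(x,x)$ by Fourier inversion against $\cos(tP_X)(x,x)$, cut off $|t|$ below the injectivity radius, insert a small-time parametrix to extract the main term, and control the tail using the smoothness gain from $\delta\ge 1$ plus the pointwise Weyl bound. That much is correct. But the proposal glosses over precisely the part of the argument where the paper does real work, and two of the claims you lean on are wrong as stated.

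First, the decay of the 1D Fourier transform of $m_\delta(\tau)=(1-\tau^2)_+^\delta$ is $|\hat m_\delta(t)|\lesssim (1+|t|)^{-1-\delta}$, not $(1+|t|)^{-\delta-1/2}$; you seem to have imported a Bessel asymptotic valid for radial transforms in a different dimension. What the paper actually records (its \eqref{3.11}) is sharper and more useful than a single pointwise bound: $\hat m_\delta(t)=a_0(t)+a_+(t)e^{it}+a_-(t)e^{-it}$ with each $a$ a symbol of order $-1-\delta$. This phase-plus-symbol structure is exactly what drives the tail estimate: inserting the spectral expansion $\cos(tP_X)=\sum_i\cos(t\mu_i)e^X_{\mu_i}\otimes \overline{e^X_{\mu_i}}$ and integrating by parts against the phases $e^{i(\pm\la\pm\mu_i)t}$ gives $r^\delta_\la(\mu)=O(\la^{-\delta}(1+|\la-\mu|)^{-N})$, which together with $\sum_{\mu_i\in[\tau,\tau+1)}|e^X_{\mu_i}(x)|^2\lesssim (1+\tau)^{d_X-1}$ yields an $O(\la^{d_X-1-\delta})$ tail. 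Your description of ``trading $\partial_t$ for $P_X$'' is not the right operation here --- differentiating $U(t,x,x)$ costs you powers of $\la$; you need to exploit the oscillation in $\hat m_\delta(\la t)\cos(t\mu)$.

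Second, and more seriously: the claim that the first subleading term of the parametrix ``has to be shown to vanish, and this follows from the vanishing of the subprincipal symbol of $\sqrt{-\Delta_X}$'' is not a proof and is not what happens. In the Hadamard form used by the paper (its \eqref{3.19}), the first correction on the diagonal is $\alpha_0(x)\int t\,\frac{\sin t|\xi|}{|\xi|}\,d\xi$ with $\alpha_0$ a genuinely nonzero curvature-dependent function; it does not vanish. What saves the day is that the Hadamard expansion drops in powers of $|\xi|^{-2}$, not $|\xi|^{-1}$, and that --- after pairing with $\rho(t)\la\hat m_\delta(\la t)$ and using $\delta\ge 1$ --- the $t\sin(t|\xi|)/|\xi|$ structure costs two powers of $\la$, not one. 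Establishing that this term is $O(\la^{d_X-2})$ is the bulk of the paper's proof (it splits $|\xi|\gtrless 2\la$, localizes with an auxiliary cutoff $\eta(t|\xi|)$, and integrates by parts separately in several regimes; see the computations between \eqref{3.20} and the end of that section). Appealing to a subprincipal-symbol vanishing statement does not substitute for this quantitative estimate, and attempting to literally ``show the term vanishes'' would fail. The remaining corrections of symbol order $\le -3$ do contribute $O(\la^{d_X-3})$ more or less for free, so your accounting for those is fine.
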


 To see that \eqref{3.6}--\eqref{3.8} imply \eqref{3.5} we recall the formulae
 $$\omega_n =\frac1n \, |S^{n-1}|  =\frac{\pi^{n/2}}{\Gamma(n/2 + 1)}
 ,$$
 and
 $$
 B(s,t)=\frac{\Gamma(s)\Gamma(t)}{\Gamma(s+t)}.
 $$
 Thus,
 \begin{align*}
 |S^{d_X-1}| \cdot  \frac12 B(d_Y/2+1,d_X/2)
&= \frac{d_X\pi^{d_X/2}}{\Gamma(d_X/2+1)}
\cdot \frac{\Gamma(d_Y/2+1)\Gamma(d_X/2)}{2\Gamma(d/2+1)}
\\
&= \frac{\pi^{d_X/2}\Gamma(d_Y/2+1)}{\Gamma(d/2+1)},
\end{align*}
and so
\begin{equation}
\label{3.9}
\aligned
\omega_{d_Y} \cdot|S^{d_X-1}| \cdot  \frac12 B(d_Y/2+2, d_X/2)
&=\frac{\pi^{d_Y/2}}{\Gamma(d_Y/2+1)} \cdot  \frac{\pi^{d_X/2}\Gamma(d_Y/2+1)}{\Gamma(d/2+1)},
\\
&=\frac{\pi^{d/2}}{\Gamma(d/2+1)} = \omega_d.
\endaligned
\end{equation}
Thus, since $d_Y/2\ge1$,  if \eqref{3.8} were valid, we would have
$$
\aligned
&(2\pi)^{-d_Y}w_{d_Y} (\text{Vol}\,Y) \la^{d_Y} \times \int_XS^{d_Y/2}_\la(x,x) \, dV(x)
\\
&=(2\pi)^{-d}\la^{d_Y}\omega_d (\text{Vol}\, Y\cdot\text{Vol}\, X)
\cdot \la^{d_X} + O(\la^{d_Y}\cdot \la^{d_X-2})
\\
&=(2\pi)^{-d} \omega_d\, \text{Vol}(X\times Y)  \la^d +O(\la^{d-2}),
\endaligned
$$
Since, by \eqref{3.6} and \eqref{3.7}, this yields \eqref{3.5}
, we conclude that the proof of Theorem~\ref{weylthm} would be complete
if we could establish Proposition~\ref{traceprop}.

\begin{proof}[Proof of Proposition~\ref{traceprop}]
The proof of kernel estimates for Bochner-Riesz estimates are well known.  See, e.g., \cite{HormanderIII}, \cite{sogge87}, \cite{soggefica}
and \cite{SoggeHangzhou}.    We shall adapt the argument in the latter reference, which is based on arguments that
exploit the Hadamard parametrix and go back
to Avakumovic~\cite{Avakumovic} and Levitan~\cite{Levitan}.   These dealt with the analog of \eqref{3.8} where
$\delta=0$ and then the  error bounds in
\eqref{3.8} must be replaced by $O(\la^{d_X-1})$.

To proceed, let $m_\delta(\tau)$, $\tau\in \R$, denote
the even function
$$\tau \to m_\delta(\tau) = (1-\tau^2)_+^\delta.$$
Then, if $\Hat m_\delta$ denotes its Fourier transform,
we have by Fourier's inversion theorem
\begin{equation*}
S^\delta_\la f(x) =m_\delta(P_X)f(x)
=\frac1\pi \int_0^\infty \la \Hat m_\delta(\la t)
\bigl(\cos tP_X\bigr)f(x) \, dt.
\end{equation*}
Thus,
\begin{equation}
\label{3.10}
\aligned
S^\delta_\la(x,y)
&=\frac1\pi \int_0^\infty \la \Hat m_\delta(\la t)
\bigl(\cos tP_X\bigr)(x,y) \, dt
\\
&=\frac1\pi \sum_i
\int_0^\infty \la \Hat m_\delta(\la t) \,
\cos t\mu_i \, e^X_{\mu_i}(x)\,
\overline{e^X_{\mu_i}(y)} \,  dt.
\endaligned
\end{equation}

To be able to exploit this, we require a couple
of facts about $m_\delta$.  First, we can write its
Fourier transform as follows
\begin{equation}
\label{3.11}
\aligned
&\Hat m_\delta(t) = a_0^\delta(t)+ a_+^\delta(t)e^{it}
+a_-^\delta(t) e^{-it}, \quad \text{where}
\\
&|\partial_t^j a(t)|
\lesssim O((1+|t|)^{-1-\delta-j}) \, \, \forall \, \,
j=0,1,2,\dots, \quad
\text{if } \, a=a_0, a_+, a_-.
\endaligned
\end{equation}
Also,
\begin{equation}\label{3.12}
\int_0^\infty m_\delta(r) \, r^{d_X-1} \, dr
=\frac12 B(\delta+1, d_X/2).
\end{equation}
Let us postpone the simple proofs of these two facts for a moment and see how they can be used, along with the
Hadamard parametrix, to prove Proposition~\ref{traceprop}.

Let us first fix an even function $\rho(t)\in C^\infty_0(\R)$ satisfying the following
\begin{equation}\label{3.13}
\text{supp }\rho \subset (-c/2,c/2) \quad
\text{and } \, \rho\equiv 1 \, \, \text{on } \, \,
[-c/4,c/4],
\end{equation}
where we assume that
$$c= \min\{1, \, \text{Inj }X/2\},$$
with $\text{Inj $X$}$ denoting the injectivity
radius of $(X,g_X)$.
It follows from \eqref{3.11} that
\begin{equation}
\label{3.14}
\aligned
r_\la^\delta(\mu)
&=\frac1\pi \int_0^\infty (1-\rho(t)) \,
\la \Hat m_\delta(\la t) \cos t\mu \, dt
\\
&=O\bigl(\la^{-\delta}(1+|\la -\mu|)^{-N}\bigr), \, \,
\forall N, \, \, \quad \text{if }\, \, \mu\ge0.
\endaligned
\end{equation}
Thus, if we modify the kernels in \eqref{3.10} as follows
\begin{equation}
\label{3.15}
\aligned
\widetilde S^\delta_\la(x,y)
&=\frac1\pi \int_0^\infty \rho(t) \la \Hat m_\delta(\la t)
\bigl(\cos tP_X\bigr)(x,y) \, dt
\\
&=\frac1\pi \sum_i
\int_0^\infty \rho(t) \la \Hat m_\delta(\la t) \,
\cos t\mu_i \, e^X_{\mu_i}(x)\,
\overline{e^X_{\mu_i}(y)} \,  dt,
\endaligned
\end{equation}
and let
$$R^\delta_\la(x,y)=S^\delta_\la(x,y)-\widetilde S^\delta_\la(x,y).$$
It follows that
\begin{equation}\label{3.16}
R^\delta_\la(x,y)=\sum_i r_\la(\mu_i)e^X_{\mu_i}(x)
\overline{e^X_{\mu_i}(y)},
\end{equation}
satisfies
\begin{equation}\label{3.17}
|R^\delta_\la(x,y)|\lesssim \la^{-\delta}
\sum_i \bigl(1+|\la-\mu_i|\bigr)^{-N} |e^X_{\mu_i}(x)| \,
|e^X_{\mu_i}(y)|, \quad N=1,2,\dots .
\end{equation}
As is well known, the pointwise Weyl formula of
Avakumovic~\cite{Avakumovic} and Levitan~\cite{Levitan}
(see also \cite{HSpec}, \cite{SoggeHangzhou}) yields
the uniform bounds
$$
\sum_{\mu_i\in [\tau, \tau+1)} |e^X_{\mu_i}(x)|^2
\lesssim (1+\tau)^{d_X-1}, \quad \tau \ge0,
$$
which in turn give us
\begin{equation}\label{3.18}R^\delta_\la(x,y)
=O(\la^{d_X-1-\delta})=O(\la^{d_X-2}),
\end{equation}
since we are assuming in Proposition~\ref{traceprop}
that $\delta\ge1$.

Consequently, it suffices to show
that $\widetilde S^\delta_\la(x,x)$ equals the first
term in the right side of \eqref{3.8} up to error terms
which are $O(\la^{d_X-2})$.

To do this, if $d_X\ge 2$, we recall that the Hadamard parametrix
implies that
for $|t|$ smaller than half the injectivity radius
of $X$ we can write
\begin{multline}\label{3.19}
 \bigl(\cos tP_X\bigr)(x,x)
=(2\pi)^{-d_X} \int_{{\mathbb R}^{d_X}} \cos t|\xi| \,
d\xi + \alpha_0(x)\int_{{\mathbb R}^{2}} t \frac{\sin t|\xi|}{|\xi|} d\xi
\\
+ \int_{{\mathbb R}^{d_X}} \cos t|\xi| \, \alpha_1(t,x,\xi)\, d\xi
+ \int_{{\mathbb R}^{d_X}} \sin t|\xi| \, \alpha_2(t,x,\xi)\, d\xi +O(1),
\end{multline}
where $\alpha_0$ is a smooth function, and the $\alpha_j$ are symbols of order $-3$, so, in particular
$$|\partial^\gamma_\xi \alpha_j|\lesssim (1+|\xi|)^{-3-|\gamma|}.$$

We shall first deal with the second term on the right side. If we take $x=y$ in \eqref{3.15} and
replace $\bigl(\cos tP_X\bigr)(x,x)$ by the second term in the right side of \eqref{3.19}, our goal is to show that

\begin{equation}\label{3.20}
\int_0^\infty
\int_{{\mathbb R}^{d_X}} t \frac{\sin t|\xi|}{|\xi|}\,  \la \Hat m_\delta(\la t) \rho (t)d\xi dt \lesssim \la^{d_X-2}.
\end{equation}

To handle the part of the integral where $|\xi|\ge 2\la$, note that since $$\Hat m_\delta(\la t)=\frac1\pi\int_{-1}^{1} e^{-it\la\tau}m_\delta(\tau)d\tau$$ is an even function in $t$, it suffices to show that
\begin{equation}\label{3.21}
\int_{|\xi|\ge 2\la} \int_{-\infty}^\infty\int_{-1}^{1}
t \frac{\sin t|\xi|}{|\xi|}\,  \la e^{-it\la\tau}m_\delta(\tau) \rho (t)  \,dt d\tau d\xi  \lesssim \la^{d_X-2}.
\end{equation}
However, by integrating by parts in $t$,  if $|\xi|\ge 2\la$, it is easy to see that
\begin{equation}\label{3.22}
 \int_{-\infty}^\infty\int_{-1}^{1}
t \frac{\sin t|\xi|}{|\xi|}\,  \la e^{-it\la\tau}m_\delta(\tau) \rho (t)  \,dt d\tau  \lesssim O(1+|\xi|)^{-N}, \,\,\,\forall\,N,
\end{equation}
which clearly implies \eqref{3.21}.

For the part of integral where $|\xi|\le2\la$,  let us fix $\eta \in C_0^\infty$ satisfying
\begin{equation}\nonumber
\text{supp }\eta \subset (1/2,\infty) \quad
\text{and } \, \eta \equiv 1 \, \, \text{on } \, \,
[1, +\infty),
\end{equation}
and write
\begin{equation}
\begin{aligned}
\int_0^\infty&\int_{|\xi|\le  2\la} t \frac{\sin t|\xi|}{|\xi|}\,  \la \Hat m_\delta(\la t) \rho (t)d\xi dt \\
&=\int_0^\infty\int_{|\xi|\le  2\la}\big(1-\eta(t|\xi|)\big) t \frac{\sin t|\xi|}{|\xi|}\,  \la \Hat m_\delta(\la t) \rho (t)d\xi dt \\
&\qquad+\int_0^\infty\int_{|\xi|\le  2\la}\eta(t|\xi|) t \frac{\sin t|\xi|}{|\xi|}\,  \la \Hat m_\delta(\la t) \rho (t)d\xi dt  \\
&=I+II.
\end{aligned}
\end{equation}

For the first term on the right, note that $|\xi|\le \min\{t^{-1}, 2\la\}$, and so by \eqref{3.11}
\begin{equation}
\begin{aligned}
I&\lesssim \int_{t\ge (2\la)^{-1}}\int_{|\xi|\le t^{-1}}  \frac{1}{(t\la)^{\delta}|\xi|}\,d\xi dt \\
&\qquad+ \int_{t\le (2\la)^{-1}}\int_{|\xi|\le 2\la} \frac{1}{|\xi|}\, d\xi dt\\
& \lesssim \int_{t\ge (2\la)^{-1}}   t^{1-d_X-\delta}\,\la^{-\delta}  dt + \int_{t\le (2\la)^{-1}}   \,\la^{d_X-1}  dt \\
& \lesssim \la^{d_X-2}.
\end{aligned}
\end{equation}

To bound $II$, by integrating by parts in $t$, we rewrite it as
\begin{equation}
\begin{aligned}
\int_{|\xi|\le  2\la}&\int_0^\infty\eta(t|\xi|)  \frac{\sin t|\xi|}{|\xi|}\, t \la \Hat m_\delta(\la t) \rho (t)d\xi dt  \\
&=\int_{|\xi|\le  2\la}\int_0^\infty\eta(t|\xi|) \frac{\cos t|\xi|}{|\xi|^2}\,  t\la \Hat m_\delta(\la t) \rho' (t)d\xi dt  \\
&\qquad +\int_{|\xi|\le  2\la}\int_0^\infty \eta'(t|\xi|) \frac{\cos t|\xi|}{|\xi|}\,  t\la \Hat m_\delta(\la t) \rho (t)d\xi dt \\
&\qquad +\int_{|\xi|\le  2\la}\int_0^\infty \eta(t|\xi|) \frac{\cos t|\xi|}{|\xi|^2}\,  \big(t\la \Hat m_\delta(\la t)\big)' \rho (t)d\xi dt \\
&=I+II+III.
\end{aligned}
\end{equation}

For the first term, since $\rho'(t)$ is supported where $t\approx 1$, by \eqref{3.11}, we have
\begin{align*}
I
&\lesssim \int_{|\xi|\le 2\la}  \frac{1}{|\xi|^2}\la^{-\delta} d\xi  \\
 & \lesssim \la^{d_X-2}, \,\,\,\text{if}\,\,\,\delta>0.
\end{align*}
For the second term, since $\eta'(t|\xi|)$ is supported where $t\approx |\xi|$, by \eqref{3.11}, we have
\begin{align*}
II&\lesssim \int_{|\xi|\le 2\la } \int_{t\approx |\xi|^{-1}} \frac{1}{(t\la)^{\delta}|\xi|}\,dt d\xi
\\
&\lesssim \int_{|\xi|\le 2\la}  \frac{1}{|\xi|^{2-\delta}}\la^{-\delta} d\xi  \\
 & \lesssim \la^{d_X-2}.
\end{align*}
For the third term,  we use the fact that by \eqref{3.11},  $\big(t\la  \Hat m_\delta(\la t)\big)'\lesssim \la^{-\delta}  t^{-1-\delta}$, which implies
\begin{align*}
III&\lesssim\int_{|\xi|\le 2\la} \int  \eta(t|\xi|)  \frac{1}{|\xi|^2}\, \la^{-\delta}  t^{-1-\delta}\rho(t)dt d\xi
\\
&\lesssim \int_{|\xi|\le 2\la}   \frac{1}{|\xi|^{2-\delta}}\la^{-\delta} d\xi  \\
 & \lesssim \la^{d_X-2}.
\end{align*}
Thus the proof of \eqref{3.20} is complete.

On the other hand,
if we take $x=y$ in \eqref{3.15} and
replace $\bigl(\cos tP_X\bigr)(x,x)$ by the third or fourth
terms in the right side of \eqref{3.19}, one can use
\eqref{3.11} to see that the resulting expression must
be bounded by
$$
\aligned
\int_{\{\xi\in{\mathbb R}^{d_X}: \, |\xi|\le 2\la \}}
&(1+|\xi|)^{-3}\, d\xi
+ \int_{\{\xi\in{\mathbb R}^{d_X}: \, |\xi|\ge 2\la \}}
(1+|\la-|\xi|\, |)^{-N} \, d\xi
\\ &=
\begin{cases}
O(\la^{d_X-3}), \quad d_X> 3, \,\,
\\
O(\log \la), \quad d_X=3, \\
O(1), \quad d_X=2,
\end{cases}
\endaligned
$$
which is better than desired. Clearly, if we also do this for the last term in the right side of \eqref{3.19}, the resulting term will
be $O(1)$.

Based on this, we would have the bounds in Proposition~\ref{traceprop} for $d_X\ge 2$ if we could show that
\begin{equation}
\label{3.26}
\aligned
&(2\pi)^{-d_X} \int_0^\infty\int_{{\mathbb R}^{d_X}}
\frac1\pi
\rho(t) \la \Hat m_\delta(t) \,  \cos t|\xi| \,
d\xi dt
\\
&= (2\pi)^{-d_X}  |S^{d_X-1}| \, \times  \frac12 B(\delta+1, d_X/2) \, \la^{d_X}
 +O(\la^{d_X-2}).
\endaligned
\end{equation}
If we repeat the argument that lead to \eqref{3.17},
we find if we replace $\rho(t)$ here by one then the
difference between this expression and the left
side of \eqref{3.26} is bounded by
$$\la^{-\delta}\int_{{\mathbb R}^{d_X}}
(1+|\la -|\xi|\, |)^{-N} \, d\xi$$
for any $N$ and hence $O(\la^{d_X-1-\delta})=O(\la^{d_X-2})$.

 Thus, by Fourier's inversion formula, up to these
errors, the expression in the left side of \eqref{3.26} is
\begin{equation}\label{3.27}
(2\pi)^{-d_X} \int_{{\mathbb R}^{d_X}}
m_\delta(|\xi|/\la) \, d\xi
=(2\pi)^{-d_X} |S^{d_X-1}| \,
\Bigl(\, \int_0^1 m_\delta(r) \, r^{d_X-1} \, dr
\, \Bigr) \cdot\la^{d_X}.
\end{equation}
Since by \eqref{3.12} the integral in the right side is equal to
$\frac12 B(\delta+1, d_X/2)$, we obtain \eqref{3.26}.

For the remaining case $d_X=1$, we shall use the fact that for $|t|$ smaller than a fixed constant $c$, by choosing coordinates such that
the metric equals $dx^2$, we have
\begin{equation}\label{3.28}
 \bigl(\cos tP_X\bigr)(x,y)
=(2\pi)^{-1} \int_{-\infty}^\infty \cos t\tau e^{i\tau(x-y)} \,d\tau\,.
\end{equation}
Moreover, one can simply repeat the argument in \eqref{3.20}-\eqref{3.21} to see that,
\begin{equation}\label{3.29}
(2\pi)^{-1} \int_0^\infty\int_{-\infty}^\infty
\frac1\pi
\rho(t) \la \Hat m_\delta(t) \,  \cos t\tau \,
d\tau dt
= (2\pi)^{-1}  B(\delta+1, 1/2) \, \la
 +O(\la^{-1}).
\end{equation}
By \eqref{3.28}, \eqref{3.29} and the arguments in \eqref{3.13}-\eqref{3.18}, we obtain \eqref{3.8} when $d_X=1$.

To finish, we still need to prove the facts \eqref{3.11} and \eqref{3.12} about $m_\delta$ that we used above.
The latter just follows from the standard formula for the beta function stated above and a change of variables.
To prove the former, \eqref{3.11} we note that
if $\rho$ is as in \eqref{3.13} then
$\Hat m_\delta(t)$ can
be written as
$$\int \rho(1-\tau) \, (1-\tau)_+^\delta (1+\tau)^\delta
\, e^{-it\tau}\, d\tau
+
\int \rho(1+\tau) \, (1+\tau)_+^\delta (1-\tau)^\delta
\, e^{-it\tau}\, d\tau +a_0^\delta(t),$$
where $a_0^\delta \in {\mathcal S}(\R)$ and hence
satisfies the bounds in \eqref{3.11}.  A simple argument
shows that the first two terms in the right can
be written as $a^\delta_+(t)e^{it}$ and
$a^\delta_-(t)e^{-it}$, respectively, with $a^\delta_\pm$ as in \eqref{3.11}, which finishes the proof.
\end{proof}

%

\newsection{Further results and remarks}  

In our main results, Theorem~\ref{mainthm}, Theorem~\ref{weylthm} and Theorem~\ref{spthm}, we focused on products of
length two, as was the case of some of the earlier results, e.g., \cite{CZWeyl} and \cite{CGBeams}.  On the other hand, Iosevich
and Wyman~\cite{IoWy} obtained further improved Weyl error bounds for products of spheres $S^{d_1}\times S^{d_2}\times \cdots
\times S^{d_n}$ as the length $n=2,3,\dots$ increased, and their $O(\la^{d-1-\delta_n})$ bounds, $d=d_1+\cdots+d_n$,
have $\delta_n\to 1$ as $n\to \infty$.  As we noted earlier, such bounds are impossible for $\delta_n>1$.  Iosevich and Wyman
conjectured that for such products of length $n\ge5$ one should be able to take $\delta_n=1-\e$ for all $\e>0$ or even
$\delta_n=1$, which would agree with the classical error term bounds for the $n$-torus (i.e., $d_1=\cdots =d_n=1$ and $n\ge5$).  See
e.g. Walfisz~\cite{latticebook}.

Let us now show that the proof of Theorem~\ref{spthm} yields optimal $L^q$ estimates for such products with $q$ large.  The particular
case where $q=\infty$ can be thought of as a weaker version of the conjecture of Iosevich and Wyman~\cite{IoWy} in the sense that
it would follow from the somewhat stronger pointwise Weyl remainder variant of their conjecture.

The improved variant of Theorem~\ref{spthm} that follows from its proof and the aforementioned optimal bounds
for ${\mathbb T}^n$ for $n=5$, is the following.

\begin{theorem}\label{5thm}  Let $Y=S^{d_1}\times \cdots \times S^{d_5}$ be a product of $5$ round spheres and let
$-\Delta_Y=-(\Delta_{S^{d_1}}+\cdots + \Delta_{S^{d_5}})$ and $P_Y=\sqrt{-\Delta_Y}$.  We then have for $\la\ge1$
\begin{equation}
\label{5.1}
\aligned
&\bigl\| \, \lala(P_Y) \, \bigr\|_{L^2(Y)\to L^q(Y)}=O(\la^{d(\frac12-\frac1q)-1}),
\\
&\text{if } \, d=d_1+\cdots+d_5 \, \, \text{and } \, \, q\ge \max\bigl\{ \tfrac{2(d_j+1)}{d_j-1}: \, 1\le j\le 5\bigr\}.
\endaligned
\end{equation}
Additionally, if $X=M^n$ is an $n$-dimensional, $n\ge1$, compact Riemannian manifold and $P=\sqrt{-(\Delta_Y+\Delta_X)}$ then
for $\la\ge1$
\begin{equation}
\label{5.2}
\aligned
&\bigl\| \, \lala(P) \, \bigr\|_{L^2(X\times Y)\to L^q(X\times Y)}=O(\la^{(d+n)(\frac12-\frac1q)-1}),
\\
&\text{if } \,  q\ge \max \{ \tfrac{2(d_1+1)}{d_1-1},
\tfrac{2(d_2+1)}{d_2-1}, \dots, \tfrac{2(d_5+1)}{d_5-1}, \tfrac{2(n+1)}{n-1} \bigr\}.
\endaligned
\end{equation}
\end{theorem}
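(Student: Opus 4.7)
The plan is to follow the argument proving Theorem~\ref{spthm}, replacing the crude $O(\lambda^\varepsilon)$ lattice-point bound \eqref{a.4'} with the classical sharp bound on the number of lattice points in a thin spherical shell in $\mathbb{R}^5$. By Walfisz-type estimates \cite{latticebook}, for $n \geq 5$ one has $\#\{k \in \mathbb{Z}^n : |k|^2 = m\} = O(m^{(n-2)/2})$ for integer $m$, and summing over the $O(1)$ integer values of $|k|^2$ in $[(\lambda - \lambda^{-1})^2, (\lambda + \lambda^{-1})^2]$ yields
\begin{equation*}
\#\{k \in \mathbb{Z}^5 : |k| \in [\lambda-\lambda^{-1}, \lambda+\lambda^{-1}]\} = O(\lambda^{3}).
\end{equation*}
A standard modification (bounded half-integer shifts, restriction to $\mathbb{Z}_{\geq 0}^5$) yields the same $O(\lambda^3)$ bound for the count of tuples $(k_1,\ldots,k_5) \in \mathbb{Z}_{\geq 0}^5$ with $\sqrt{\sum_{j=1}^5 (k_j + (d_j-1)/2)^2} \in [\lambda-\lambda^{-1}, \lambda+\lambda^{-1}]$, which enumerates the distinct eigenvalues of $P_Y$ contributing to $\lala(P_Y)$.

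With this count in hand, I would decompose $f = \lala(P_Y) g$ as $f = \sum f_{k_1,\ldots,k_5}$, the sum running over the $N = O(\lambda^3)$ admissible tuples and each $f_{k_1,\ldots,k_5}$ being a linear combination of tensor products $e_{k_1}^{\mu_1}(x_1)\cdots e_{k_5}^{\mu_5}(x_5)$ of spherical harmonics. Iterating the universal spherical-harmonic bound of \cite{sogge86} coordinate-by-coordinate five times, with Minkowski's inequality applied at each step exactly as between the two lines of \eqref{00.7}, produces
\begin{equation*}
\|f_{k_1,\ldots,k_5}\|_{L^q(Y)} \lesssim \lambda^{\sum_{j=1}^5 \alpha(q,d_j)} \|f_{k_1,\ldots,k_5}\|_{L^2(Y)}.
\end{equation*}
The condition $q \geq \max_j \tfrac{2(d_j+1)}{d_j-1}$ places each $\alpha(q,d_j)$ in the post-critical range, so $\alpha(q,d_j) = d_j(\tfrac12 - \tfrac1q) - \tfrac12$ and hence $\sum_j \alpha(q,d_j) = d(\tfrac12 - \tfrac1q) - \tfrac52$. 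Combining the pointwise Cauchy--Schwarz bound $|f(y)| \leq N^{1/2} \bigl(\sum |f_{k_1,\ldots,k_5}(y)|^2\bigr)^{1/2}$, Minkowski's inequality (valid since $q \geq 2$), and the mutual orthogonality of the $f_{k_1,\ldots,k_5}$ then gives
\begin{equation*}
\|f\|_{L^q(Y)} \lesssim \lambda^{3/2} \cdot \lambda^{d(\frac12 - \frac1q) - \frac52} \|f\|_{L^2(Y)} = \lambda^{d(\frac12 - \frac1q) - 1} \|f\|_{L^2(Y)},
\end{equation*}
which is \eqref{5.1}.

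Finally, \eqref{5.2} is an immediate consequence of Theorem~\ref{mainthm} applied with $X = M^n$, $\varepsilon(\lambda) = \lambda^{-1}$, and $B(\lambda) = \lambda^{d(\frac12 - \frac1q) - \frac12}$: hypothesis \eqref{1} is precisely \eqref{5.1}, while \eqref{a} and \eqref{2} hold trivially for a pure power $B$ and $\varepsilon(\lambda) = \lambda^{-1}$, and \eqref{aa} reduces to $B(\lambda) \gtrsim \lambda^{\alpha(q,d)}$, which holds with equality since $q \geq \max_j \tfrac{2(d_j+1)}{d_j-1} \geq \tfrac{2(d+1)}{d-1}$ (as $x \mapsto 2(x+1)/(x-1)$ is decreasing) forces $\alpha(q,d) = d(\tfrac12 - \tfrac1q) - \tfrac12$. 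The further assumption $q \geq \tfrac{2(n+1)}{n-1}$ makes $\alpha(q,n) = n(\tfrac12 - \tfrac1q) - \tfrac12$, so the right side of \eqref{m.1} evaluates to $\sqrt{\varepsilon(\lambda)}\, B(\lambda)\, \lambda^{\alpha(q,n)+1/2} = \lambda^{(d+n)(\frac12 - \frac1q) - 1}$, as required. The principal substantive step in the whole argument is the $n=5$ lattice-point count itself; the mild extensions to shifted, one-sided lattices are essentially routine via sign-pattern inclusion--exclusion and the observation that the shifted thin shell meets only $O(\lambda^3)$ lattice points.
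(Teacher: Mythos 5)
Your proof is correct and follows essentially the same route as the paper's: iterate the post-critical universal spherical-harmonic bounds five times to bound each fixed-degree tensor block by $\lambda^{d(\frac12-\frac1q)-\frac52}$, invoke the Walfisz-type lattice count $O(\lambda^3)$ for $n=5$, and apply Cauchy--Schwarz, then obtain \eqref{5.2} by feeding \eqref{5.1} into Theorem~\ref{mainthm} with $\varepsilon(\lambda)=\lambda^{-1}$. The only cosmetic difference is that you bound the spectral projector directly by counting lattice points in the thin shell of width $\lambda^{-1}$, whereas the paper first proves the single-eigenspace estimate \eqref{5.1'} (counting on the exact sphere) and then upgrades via the $c_0\lambda^{-1}$ gap between distinct eigenvalues; since the shell meets only $O(1)$ spheres, the two counts coincide.
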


Both estimates represents a $\la^{-1/2}$ improvement over the universal bounds in \cite{sogge88}, and, as mentioned before,
this is optimal.

To prove the Theorem, we first note that the second estimate, \eqref{5.2}, is a simple consequence
of the first one, \eqref{5.1}, and Theorem~\ref{mainthm} after noting that $\alpha(q,n)=n(\tfrac12-\tfrac1q)-\tfrac12$ for
$q$ as in \eqref{5.2}.

Let us now see how we can use the proof of Theorem~\ref{spthm} and the classical improved lattice point
counting bounds in dimension $5$ to obtain \eqref{5.1}.

Just as we did before for products of length $2$, we first note that if $\{e^\nu_{j,k}\}_\nu$ is an orthonormal basis
of spherical harmonics of degree $k$ on $S^{d_j}$ then an orthonormal basis of eigenfunctions on
$Y=S^{d_1}\times \cdots \times S^{d_5}$ is of the form
$$e_{1,k_1} \cdot e_{2,k_2}   \cdot  e_{3,k_3}  \cdot e_{4,k_4} \cdot e_{5,k_5},$$
where $e_{j,k_j}=e^{\nu_j}_{j,k_j}$ for some $\nu_j$, with $j=1,2,3,4,5$.  Consequently,
$$
\aligned
&\Bigl(-\Delta_Y+\bigl(\tfrac{d_1-1}2\bigr)^2+\cdots+\bigl(\tfrac{d_5-1}2\bigr)^2\Bigr)
\, \bigl[ e_{1,k_1}\cdots e_{5,k_5}\bigr]
\\
&= \Bigl( \, \bigl(k_1+\tfrac{d_1-1}2\bigr)^2+\cdots + \bigl(k_5+\tfrac{d_5-1}2\bigr)^2 \, \Bigr) \,  e_{1,k_1}\cdots e_{5,k_5},
\endaligned
$$
and so, analogous  to \eqref{00.5}, the eigenvalues of $P_Y$ are
\begin{equation}\label{5.3}
\la=\la_{k_1,\dots,k_5}=
\sqrt{\bigl(k_1+\tfrac{d_1-1}2\bigr)^2+\cdots + \bigl(k_5+\tfrac{d_5-1}2\bigr)^2},
\, \, \, k_j=0,1,2,\dots, \, \, 1\le j\le 5.
\end{equation}

Also, analogous to before, an eigenfunction with this eigenvalue must be of the form
\begin{equation}\label{5.4}
e_\la(x_1,\dots,x_5)=
\sum_{\{(k_1,\dots,k_5): \, \la_{k_1,\dots,k_5}=\la\}}
\Bigl( \sum_{\nu^1_{j_1}, \dots, \nu^5_{j_5}} a^{\nu^1_{j_1}, \dots, \nu^5_{j_5}}_{k_1,\dots,k_5}
e^{\nu^1_{j_1}}_{k_1}(x_1)\cdots e^{\nu^5_{j_5}}_{k_5}(x_5)\Bigr).
\end{equation}
Here $\{ e^{\nu^\ell_{j_\ell}}_{k_\ell}\}_{j_\ell}$ is the orthonormal basis of spherical harmonics of degree $k_\ell$ on
$S^{d_\ell}$, $\ell=1,\dots,5$.

Next, we note that for $q$ as in \eqref{5.1}, we have that if, as in \eqref{4}, $\alpha(q,d_j)$ denotes
the $\la$-power in the universal $L^q$-estimates then
\begin{equation}\label{5.5}
\alpha(q,d_j)=d_j(\tfrac12-\tfrac1q)-\tfrac12,
\end{equation}
if $q$ is as in \eqref{5.1}.  Thus, if we inductively use the universal bounds from \cite{sogge88} (or the earlier
bounds for spherical harmonics \cite{sogge86}), we find that if $k_1,\dots,k_5$ are fixed and
$\la_{k_1,\dots,k_5}=\la$
\begin{equation}
\label{5.6}
\aligned
&\Bigl\| \sum_{\nu^1_{j_1}, \dots, \nu^5_{j_5}} a^{\nu^1_{j_1}, \dots, \nu^5_{j_5}}_{k_1,\dots,k_5}
e^{\nu^1_{j_1}}_{k_1} \cdots e^{\nu^5_{j_5}}_{k_5} \, \Bigr\|_{L^q(Y)}
\\
&\le C\Bigl(\prod_{j=1}^5 \la^{d_j(\tfrac12-\tfrac1q)-\tfrac12}\Bigr) \cdot \Bigl(\,
 \sum_{\nu^1_{j_1}, \dots, \nu^5_{j_5}} \bigl|a^{\nu^1_{j_1}, \dots, \nu^5_{j_5}}_{k_1,\dots,k_5}\bigr|^2 \, \Bigr)^{1/2}
 \\
 &=C\la^{d(\tfrac12-\tfrac1q)-\tfrac52}
 \Bigl(
  \sum_{\nu^1_{j_1}, \dots, \nu^5_{j_5}} \bigl|a^{\nu^1_{j_1}, \dots, \nu^5_{j_5}}_{k_1,\dots,k_5}\bigr|^2 \, \Bigr)^{1/2}.
\endaligned
\end{equation}

To use this we recall that when $n\ge5$ we have the following improvement of \eqref{a.7}
\begin{equation}\label{5.7}
\#\{j\in {\mathbb Z}^n: \, |j|=\la \}=O(\la^{n-2}), \quad \text{if } \, \, n\ge 5.
\end{equation}
Indeed, this is a consequence of the stronger result for the problem of counting
the number of integer lattice points inside $\la$-balls centered at the
origin (e.g. \cite[p. 45]{latticebook}).

If we use \eqref{5.7} along with Cauchy-Schwarz inequality we deduce that \eqref{5.6} implies
that if $e_\la$ is as in \eqref{5.4}
\begin{align}\label{5.1'}\tag{4.1$'$}
\|e_\la\|_{L^q(Y)}&\le C\la^{d(\tfrac12-\tfrac1q)-\tfrac52} \, \sqrt{\la^3} \,
 \Bigl(
  \sum_{\nu^1_{j_1}, \dots, \nu^5_{j_5}} \bigl|a^{\nu^1_{j_1}, \dots, \nu^5_{j_5}}_{k_1,\dots,k_5}\bigr|^2 \, \Bigr)^{1/2}
  \\
  &=C\la^{d(\tfrac12-\tfrac1q)-1} \|e_\la\|_{L^2(Y)}. \notag
  \end{align}

  As before, this estimate for eigenfunctions implies the spectral projection bounds due to the fact that successive distinct
  eigenvalues of $P_Y$ which are comparable to $\la$ have gaps that are bounded below by $c_0\la^{-1}$ for some
  uniform $c_0>0$.  This completes the proof of Theorem~\ref{5thm}.

  \medskip

  \noindent {\bf Remark.}  It would be interesting to investigate other situations involving product manifolds where one is able
  to obtain  $L^q$ estimates that improve ones that follow from Theorem~\ref{mainthm}.  For instance
  Canzani and Galkowski~\cite{CGBeams} showed that if $Y$ is a product manifold then one has
  $\sqrt{\log\la}$ improvements over the universal bounds for large $q$ (i.e., $\e(\la)=(\log \la)^{-1}$ in Theorem~\ref{mainthm}).
  In this case, $X\times Y$ in Theorem~\ref{mainthm} would be a product of three manifolds, yet our
  results do not give further improvements over the results coming from \cite{CGBeams}.  Similarly, if {\em both} $X$ and $Y$
  have improved eigenfunction bounds
  are there situations where
    $X\times Y$ can inherit both improvements, as opposed to the better of the two
  improvements for $X$ and $Y$ as guaranteed by Theorem~\ref{mainthm}?  Our proof does not seem to yield such
  a result.  Moreover, in many cases one cannot obtain $\sqrt{\e_X(\la)} \cdot \sqrt{\e_Y(\la)}$ improvements if
  $X$ and $Y$, respectively, have $\sqrt{\e_X(\la)}$ and  $\sqrt{\e_Y(\la)}$ improvements.  This is the case, for instance,
  when they both involve power improvements $\sqrt{\e_X(\la)}=\la^{-\delta_X}$ and $\sqrt{\e_Y(\la)}=\la^{-\delta_Y}$
  with $\delta_X+\delta_Y>1/2$, for reasons mentioned before.


\bibliography{refs}
\bibliographystyle{abbrv}

%

\end{document}